\newtheorem{theorem}[equation]{Theorem}
\newtheorem{theorem-definition}[equation]{Theorem-Definition}
\newtheorem{lemma-definition}[equation]{Lemma-Definition}
\newtheorem{definition-prop}[equation]{Proposition-Definition}
\newtheorem{prop}[equation]{Proposition}
\newtheorem{lemma}[equation]{Lemma}
\newtheorem{cor}[equation]{Corollary}
\newtheorem{lem}[equation]{Lemma}
\newcommand{\llbracket}{[\negthinspace[}
\newcommand{\rrbracket}{]\negthinspace]}
\newcommand{\llpar}{(\negthinspace(}
\newcommand{\rrpar}{)\negthinspace)}
\theoremstyle{definition}
\newcommand{\N}{\ensuremath{\mathbb{N}}}
\newcommand{\Z}{\ensuremath{\mathbb{Z}}}
\newcommand{\Q}{\ensuremath{\mathbb{Q}}}
\newcommand{\R}{\ensuremath{\mathbb{R}}}
\newcommand{\C}{\ensuremath{\mathbb{C}}}
\newcommand{\cX}{\ensuremath{\mathscr{X}}}
\newcommand{\cC}{\ensuremath{\mathscr{C}}}
\newcommand{\cU}{\ensuremath{\mathscr{U}}}
\newcommand{\cY}{\ensuremath{\mathscr{Y}}}
\newcommand{\cZ}{\ensuremath{\mathscr{Z}}}
\newcommand{\Spec}{\ensuremath{\mathrm{Spec}\,}}
\newcommand{\Spf}{\ensuremath{\mathrm{Spf}\,}}
\newcommand{\red}{\mathrm{red}}
\newcommand{\Gal}{\mathrm{Gal}}
\newcommand{\an}{\mathrm{an}}
\newcommand{\weight}{\mathrm{wt}}
\newcommand{\Sk}{\mathrm{Sk}}
\newcommand{\snc}{\mathrm{snc}}
\newcommand{\D}{\mathcal{D}}
\numberwithin{equation}{subsection}
\newcommand{\sss}{\vspace{5pt} \subsubsection*{ }\refstepcounter{equation}{{\bfseries(\theequation)}\ }}
\begin{document}
\title[The essential skeleton of a degeneration of algebraic varieties]{The essential skeleton of a degeneration of algebraic varieties}

\author{Johannes Nicaise}
\address{KU Leuven\\
Department of Mathematics\\
Celestijnenlaan 200B\\3001 Heverlee \\
Belgium} \email{johannes.nicaise@wis.kuleuven.be}

\author{Chenyang Xu}
\address{Beijing International Center of Mathematics Research\\ Beijing University \\ Beijing \\ China}
\email{cyxu@math.pku.edu.cn}

\begin{abstract}
 In this paper, we explore the connections between the Minimal Model Program and
 the theory of Berkovich spaces.  Let $k$ be a field of characteristic zero and let $X$ be a smooth and projective $k\llpar t\rrpar$-variety with semi-ample canonical
 divisor. We prove that the essential skeleton of
 $X$ coincides with the skeleton of any
minimal $dlt$-model and that it is a strong deformation retract of
the Berkovich analytification of $X$. As an application, we show
that the essential skeleton of a Calabi-Yau variety over $k\llpar
t\rrpar$ is a pseudo-manifold.
%
 \end{abstract}

\maketitle

\tableofcontents

\section{Introduction}
 Let $k$ be a field of characteristic zero and set $R=k\llbracket
 t\rrbracket$ and $K=k\llpar t\rrpar$. We fix a $t$-adic absolute value on $K$ by setting $|t|_K=1/e$. Let $X$ be a geometrically
 connected, smooth and proper $K$-variety. Then one can associate
 to $X$ a $K$-analytic space $X^{\an}$ in the sense of
 \cite{berkbook}. Each point of this space can be interpreted as
 a real valuation on the residue field of a point of $X$,
 extending the $t$-adic valuation on $K$. Thus $X^{\an}$ is
 naturally related to the birational geometry of $R$-models of
 $X$.

  An $snc$-model of $X$ is a regular flat separated $R$-scheme of finite type
  $\cX$, endowed with an isomorphism of $K$-schemes $\cX_K\to X$,
  such that the special fiber $\cX_k$ is a (not necessarily reduced) divisor with strict
  normal crossings. Each $snc$-model $\cX$ of $X$ gives rise to a
  so-called skeleton $\Sk(\cX)$, a finite simplicial space
  embedded in the $K$-analytic space $X^{\an}$, canonically
  homeomorphic to the dual intersection complex $\D(\cX_k)$ of $\cX_k$ \cite[\S3]{MuNi}. If
  $\cX$ is proper over $R$, then $\Sk(\cX)$ is a strong
  deformation retract of $X^{\an}$ (see Theorem \ref{thm:deform} and
  \eqref{sss:defret}). Results of this type are fundamental
  tools in the study of the homotopy type of $K$-analytic spaces,
  for instance in Berkovich's proof of local contractibility of
  smooth $K$-analytic spaces \cite{berk-contr}. On the other hand,
  the fact that the space $X^{\an}$ does not depend on any choice
  of model implies that the homotopy type of $\Sk(\cX)$ does not
  depend on the choice of $\cX$; see \cite{thuillier} for a
  similar result in the context of embedded resolutions of pairs
  of varieties over a perfect field.

If $X$ is a curve of genus $\geq 1$, then it is well-known that
 $X$ has a minimal
 $snc$-model, which gives rise to a {\em canonical} skeleton in $X^{\an}$. However, in higher dimensions, no such
 distinguished $snc$-model exists, and one can wonder if it is
 still possible to construct a canonical skeleton inside the
 space $X^{\an}$. In this paper, we study two such constructions. Although they look quite different at first
 sight, we prove that they indeed yield the same result.

 The first one is the so-called {\em essential skeleton} from \cite[4.6.2]{MuNi}, a
 generalization of a construction of Kontsevich and Soibelman in
 \cite{KoSo} motivated by homological mirror symmetry. Its definition is quite natural:
 for every non-zero regular pluricanonical form $\omega$ on $X$
 and every proper $snc$-model $\cX$ of $X$, the form $\omega$ singles out
 certain faces of the skeleton $\Sk(\cX)$ corresponding to
 intersections of irreducible components where $\omega$ has
 minimal weight in a suitable sense; see \cite[4.5.5]{MuNi} for a precise
 statement. Taking the union of such faces as $\omega$ varies, we
 obtain a simplicial subspace $\Sk(X)$ of $\Sk(\cX)$ that can be
 characterized intrinsically on $X$ and thus no longer depends on
 any choice of an $snc$-model. This space $\Sk(X)$ was called the essential
 skeleton of $X$ in \cite[4.6.2]{MuNi}. If $X$ has trivial
 canonical sheaf, then $\Sk(X)$ coincides with the Kontsevich-Soibelman
 skeleton from \cite{KoSo} associated to any volume form
 $\omega$ on $X$.

  A second construction appears in the context of the Minimal
  Model Program, specifically in the paper \cite{dFKX}. Assume that $X$ is projective. If we enlarge our class of models from
  $snc$-models to so-called $dlt$-models \eqref{sss:dltdef}, then relative minimal models over $\Spec R$
  exist in any dimension, provided that the canonical divisor $K_X$ of the generic fiber is semi-ample
  (see Theorem \ref{thm-mindlt} -- for technical
   reasons, we are obliged to assume that $X$ is defined over an algebraic $k$-curve
    and to work with models over the base curve, because the results from MMP that we use have only been proven for $k$-schemes of finite type). Such a minimal
    $dlt$-model is not unique, but any two of them are crepant
    birational, which implies that their skeleta are the same
    (Corollary \ref{cor-mindlt}). Moreover, we prove that this canonical skeleton is still a
    strong deformation retract of $X^{\an}$ (Corollary
    \ref{cor-sdr}).

     Our main result, Theorem \ref{thm-ess}, states
    that these two constructions are equivalent: if $K_X$ is
    semi-ample, then the essential skeleton $\Sk(X)$ coincides
    with the skeleton of any minimal $dlt$-model.

    We present two applications of this equivalence. First, as an immediate corollary of the above results, we obtain
      that the essential skeleton $\Sk(X)$ is a strong deformation retract of $X^{\an}_K$ when $K_{X}$ is semi-ample (see Corollary \ref{cor-defret}).
   Second, in Section
    \ref{ss-CY}, we study the topological
    properties of the essential skeleton of a Calabi-Yau variety
    $X$
    over $K$. Using \cite{KK,Kol11}, we show that $\Sk(X)$ is a pseudo-manifold with boundary, and even a closed pseudo-manifold
     when $\Sk(X)$ has maximal dimension and $k$ is algebraically closed. Moreover,
      using logarithmic geometry, we show that $\Sk(X)$ only depends on the reduction modulo $t^2$ of any
    proper $snc$-model of $X$, which allows us to remove the
    technical assumption that $X$ is defined over a curve (Theorem \ref{thm:CY}).

\addtocontents{toc}{\protect\setcounter{tocdepth}{-1}}

\subsection*{Acknowledgements} We are grateful to Tommaso de
Fernex and J\'anos Koll\'ar for helpful discussions, and to the referee for carefully reading the manuscript
and making valuable suggestions. This joint
work was started when both of the authors attended the conference
{\it Arithmetic Algebraic Geometry} held in Berlin in
 June 2013. We thank the organizers, especially H\'el\`ene
Esnault, for the hospitality.  JN is partially
supported by the ERC Starting Grant MOTZETA (project 306610) of the European Research Council.  CX is partially supported by the
grant `Recruitment Program of Global Experts'.

\subsection*{Terminology and conventions}
 We follow \cite{Kol13} for the definitions of various notions of a singular pair from the Minimal Model Program, including {\it  klt, dlt} and {\em log canonical pairs}.
  In particular, we refer to \cite[4.15]{Kol13} for the definition of {\em log canonical
  centers}. The non-archimedean analytic spaces that appear in
  this paper are $K$-analytic spaces in the sense of
  \cite{berkbook}. We refer to \cite{temkin} for a gentle
  introduction. We will also make use of some basic logarithmic
  geometry; all log structures in this paper are defined with respect
  to the Zariski topology, and they are fine and saturated ($fs$).
  The standard introduction to logarithmic geometry is
  \cite{kato-intro}.

\addtocontents{toc}{\protect\setcounter{tocdepth}{2}}

\section{Minimal $dlt$-models}

\subsection{A few reminders on Berkovich spaces and formal models}
\sss Let $k$ be a field of characteristic zero. We set
$R=k\llbracket t\rrbracket$ and $K=k\llpar t\rrpar $, and we fix a
$t$-adic absolute value $|\cdot|_K$ on $K$ by setting $|t|_K=1/e$.
For every $K$-scheme of finite type $Y$, we denote by $Y^{\an}$
the associated $K$-analytic space. As a set, it consists of the couples of the form $x=(y,|\cdot|)$ where $y$ is a point
of $Y$ and $|\cdot|$ is an absolute value on the residue field
 $\kappa(y)$ of $Y$ at $y$ that extends the absolute value $|\cdot|_K$ on $K$. The topology on $Y^{\an}$ is the weakest topology such that the forgetful map $$\iota:Y^{\an}\to Y:(y,|\cdot|)\mapsto y$$ is continuous and such that, for every open subset $U$ of $Y$ and every regular function $f$ on $U$, the map
 $$\iota^{-1}(U)\to \R_{\geq 0}:(y,|\cdot|)\mapsto |f(y)|$$ is continuous.  The residue field of $Y^{\an}$
 at a point $x=(y,|\cdot|)$ is the completion of $\kappa(y)$ with respect to the absolute value $|\cdot|$. It is denoted by $\mathscr{H}(x)$, and we will write $\mathscr{H}(x)^o$ for its valuation ring.

\sss For every separated $R$-scheme
of finite type $\cY$ we set $\cY_k=\cY\times_R k$ and
$\cY_K=\cY\times_R K$. We say that a point $x$ of $(\cY_K)^{\an}$ has a center on $\cY$ if
 the natural morphism $$\Spec \mathscr{H}(x)\to \cY_K$$ extends to a morphism
$$\Spec \mathscr{H}(x)^o\to \cY.$$ Such an extension is necessarily unique by the valuative criterion for separatedness. If it exists, the image of the closed point of $\Spec \mathscr{H}(x)^o$ is a point of $\cY_k$ that we call the center of $x$ on $\cY$ and that we denote by $\red_{\cY}(x)$. The set of all the points on $(\cY_K)^{\an}$ with a center
 on $\cY$ is denoted by $\widehat{\cY}_\eta$, and the map
  $$\red_{\cY}:\widehat{\cY}_\eta\to \cY_k:x\mapsto \red_{\cY}(x)$$ is called the reduction
map. It is anti-continuous, meaning that the inverse image of every open is closed. Our notation is justified by the following fact:
 if we denote by $\widehat{\cY}$
the formal $t$-adic completion of $\cY$, then $\widehat{\cY}_\eta$
 is a compact analytic domain in $\cY_K^{\an}$ that is canonically isomorphic to the generic fiber of
 $\widehat{\cY}$ in the category of $K$-analytic spaces.
 If $\cY$ is proper over $R$, then
 $\widehat{\cY}_\eta=\cY_K^{\an}$ by the valuative criterion for properness.

\subsection{Models and log pullbacks}
\sss Let $\cC$ be a connected smooth algebraic curve over $k$. Let
$s$ be a $k$-rational point on $\cC$ and set $C=\cC\setminus
\{s\}$. We fix a uniformizer $t$ in $\mathcal{O}_{\cC,s}$. This
choice determines an isomorphism of $k$-algebras $R\to
\widehat{\mathcal{O}}_{\cC,s}$ and thus a morphism of $k$-schemes
$\Spec R\to \cC$.

 \sss \label{sss:not} Let
$X$ be a smooth and proper scheme over $C$ with geometrically
connected fibers. A model of $X$ over $\cC$ is a flat separated
 $\cC$-scheme of finite type $\cX$ endowed with an isomorphism of $C$-schemes
$\cX\times_{\cC} C\to X$. Note that we do not require $\cX$ to be
proper over $\cC$. Morphisms of models are defined in the usual
way. We denote by $\cX_s$ the fiber of $\cX$ over $s$, by
 $\cX_R$ the base change of $\cX$ to $\Spec R$ and by $X_K$ the
base change of $X$ to $\Spec K$. We denote by $K_X$ a relative
canonical divisor for $X$ over $C$, and for every normal model
$\cX$ of $X$, we denote by $K_{\cX}$ a relative canonical divisor
for $\cX$ over $\cC$.

\sss For every $\cC$-model $\cX$ of $X$, we denote by $\cX^{\snc}$
the subset of $\cX$ consisting of the points where $\cX$ is
regular and $\cX_s$ is a divisor with strict normal crossings
(some authors use the terminology ``simple normal crossings''
instead). Thus $\cX^{\snc}$ is the union of $X$ with the set of
points $x$ of $\cX_s$ such that $\mathcal{O}_{\cX,x}$ is regular
and there exist a unit $u$ and a regular system of local
parameters $(z_1,\ldots,z_n)$ in $\mathcal{O}_{\cX,x}$ and
non-negative integers $N_1,\ldots,N_n$ such that
$$t=u\prod_{i=1}^n (z_i)^{N_i}.$$  The subset $\cX^{\snc}$ is an
open subscheme of $\cX$ and it is again a $\cC$-model of $X$.
Moreover, if $\cX$ is normal, then $\cX^{\snc}_s$ is dense in
$\cX_s$.  We say that $\cX$ is an $snc$-model of $X$ if
$\cX=\cX^{\snc}$, that is, if $\cX$ is regular and $\cX_s$ is a
divisor with strict normal crossings.  If $\cX$ is a model of $X$
over $\cC$, then a log resolution of $(\cX,\cX_s)$ is a proper
morphism of $\cC$-models $h:\cY\to \cX$ such that $\cY$ is an
$snc$-model of $X$.

\sss \label{sss-logpb} Let $h:\cY\to \cX$ be a proper morphism of
normal
 $\cC$-models of $X$. Assume that $K_{\cX}+(\cX_s)_{\red}$
is $\Q$-Cartier. Then the log pullback of $(\cX_s)_{\red}$ to
$\cY$ is the unique $\Q$-Weil divisor $\Delta$ on $\cY$ such that
$K_{\cY}+\Delta$ is $\Q$-linearly equivalent to
$$f^*(K_{\cX}+(\cX_s)_{\red})$$ and $f_*\Delta=(\cX_s)_{\red}$.

\sss \label{sss:notMuNi} We will use the following notations from
\cite{MuNi}. If $\cX$ is a normal model of $X$ over $\cC$, $x$ is
a point of $\widehat{\cX}_\eta$ and $D$ is a divisor on $\cX$ that
is supported on $\cX_s$ and Cartier at $\red_{\cX}(x)$, then we
set
$$v_x(D)=-\ln |f(x)|$$ where $f$ is any element of the local ring
of $\cX$ at $\red_{\cX}(x)$ such that $D=\mathrm{div}(f)$ locally
at $\red_{\cX}(x)$. It is clear that $v_x(D)$ is linear in $D$. If
$\cX$ is regular and $\omega$ is a non-zero rational section of
$\omega_{\cX_R/R}^{\otimes m}$, for some $m>0$ (for instance, an
 $m$-pluricanonical form on $X_K$) then we denote by
 $\mathrm{div}_{\cX}(\omega)$ the corresponding divisor on $\cX_R$.

\subsection{$dlt$-models}
\sss \label{sss:dltdef}  A $dlt$-model of $X$ is a normal proper
$\cC$-model $\cX$ of $X$
 such that $(\cX,(\cX_s)_{\red})$ is a $dlt$-pair. This means that
 $(\cX,(\cX_s)_{\red})$ is log canonical and that each log
 canonical center of $(\cX,(\cX_s)_{\red})$ has non-empty
 intersection with $\cX^{\snc}$.  In
 particular, every proper $snc$-model of $X$ is a $dlt$-model. An equivalent formulation of
 the
 definition is the following: $K_{\cX}+(\cX_s)_{\red}$ is
 $\Q$-Cartier, and for every log resolution $h:\cY\to \cX$ of
 $(\cX,\cX_s)$ and every irreducible component $E$ of $\cY_s$, the
 multiplicity of $E$ in the log pullback $\Delta$ of
 $(\cX_s)_{\red}$ to $\cY$ is at most $1$. Moreover, if it is
 equal to $1$, then $h(E)$ must have non-empty intersection with
 $\cX^{\snc}$.
  In practice, we will apply the $dlt$
property {\em via} Lemma \ref{lemm:dlt} below.

\sss  We say that a $dlt$-model $\cX$ of $X$ is a {\it good minimal
model} if $\cX$ is
  $\Q$-factorial and
$K_{\cX}+(\cX_s)_{\red}$ is semi-ample over $\cC$.

\sss For every $dlt$-model $\cX$ of $X$, we can define the dual
complex $\D((\cX_s)_{\red})$ for the $dlt$-pair
$(\cX,(\cX_s)_{\red})$ by gluing cells corresponding to
irreducible components of intersections of irreducible components
of $\cX_s$, as in Definition 8 in \cite{dFKX}. When $k$ is not
algebraically closed, we note that we only glue cells
corresponding to irreducible components (instead of geometrically
irreducible components). In other words, $\D((\cX_s)_{\red})$ is
the quotient of the $\Gal(k^a/k)$-equivariant dual complex
constructed in \cite[\S31]{dFKX}.

\sss For every $dlt$-model $\cX$ of $X$, the log canonical centers
of $(\cX,(\cX_s)_{\red})$ are the irreducible components of
intersections of irreducible components of $(\cX_s)_{\red}$, by
\cite[4.16]{Kol13}. These are also precisely the closures
  in $\cX_s$ of the connected components
 of intersections of irreducible components of $\cX^{\snc}_s$ (since these connected components are the log canonical centers of
 $(\cX^{\snc},(\cX_s^{\snc})_{\red})$). Thus, the dual intersection complex $\D((\cX_s)_{\red})$ is the same
 as the dual intersection complex of the strict normal crossings
 divisor
 $\cX^{\snc}_s$, and the cells of this complex correspond bijectively to the log canonical centers
 of $(\cX,(\cX_s)_{\red})$. See Section 2 in \cite{dFKX} for more background.

\sss Let $\cX_1$ and $\cX_2$ be two $dlt$-models of $X$ over
$\cC$. We say that $\cX_1$ and $\cX_2$ are crepant birational if
there exist a normal proper $\cC$-model $\cY$ of $X$ and morphisms
of $\cC$-models $f_i:\cY\to \cX_i$ for $i=1,2$ such that the log
pullbacks of $(\cX_{1,s})_{\red}$ and $(\cX_{2,s})_{\red}$
coincide (see \cite[2.23]{Kol13}). Note that we can always assume that $\cY$ is an
$snc$-model, by taking a log resolution of $(\cY,\cY_s)$.
 The following theorem collects two fundamental results from
the Minimal Model Program.

\begin{theorem}\label{thm-mindlt}\item
\begin{enumerate}
\item \label{item:exist} If  $X$ is projective over $C$, then $X$
has a good minimal $dlt$-model if and only if $K_X$ is semi-ample
over $C$.

\item \label{item:crep} Any two good minimal $dlt$-models of $X$
are crepant birational.
\end{enumerate}
\end{theorem}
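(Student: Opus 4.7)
\emph{Plan for \eqref{item:exist}.} The forward implication is immediate: if $\cX$ is a good minimal $dlt$-model, then $K_{\cX}+(\cX_s)_{\red}$ is semi-ample over $\cC$, and since $(\cX_s)_{\red}$ restricts to zero on $\cX\times_{\cC}C=X$, the restriction $K_X$ is semi-ample over $C$. For the reverse direction, I would first produce a proper $snc$-model $\cX_0$ of $X$ over $\cC$ by a Nagata compactification of $X$ (taking schematic closure to ensure flatness) followed by Hironaka resolution of singularities in characteristic zero; since $(\cX_0,(\cX_{0,s})_{\red})$ is log smooth, $\cX_0$ is in particular a $\Q$-factorial $dlt$-model. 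Then I would run a relative $(K_{\cX_0}+(\cX_{0,s})_{\red})$-MMP over $\cC$. The hypothesis that $K_X$ is semi-ample over $C$ is precisely the statement that the generic fiber of the pair already admits a good minimal model (itself), so by the existence theorems for relative good minimal models of $dlt$-pairs whose generic fiber admits a good minimal model (Hacon--Xu, Birkar, building on BCHM), the MMP terminates with a $\Q$-factorial $dlt$-model $\cX$ for which $K_{\cX}+(\cX_s)_{\red}$ is semi-ample over $\cC$.

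\emph{Plan for \eqref{item:crep}.} Given two good minimal $dlt$-models $\cX_1,\cX_2$, they are birational over $\cC$ because both contain $X$ as an open subscheme. I would resolve the graph of $\cX_1\dashrightarrow\cX_2$ to obtain a common proper $snc$-model $\cY$ with morphisms $h_i\colon\cY\to\cX_i$. Let $\Delta_i$ be the log pullback of $(\cX_{i,s})_{\red}$ to $\cY$, as in \eqref{sss-logpb}, so that $K_{\cY}+\Delta_i$ is $\Q$-linearly equivalent to $h_i^{*}(K_{\cX_i}+(\cX_{i,s})_{\red})$ and $(h_i)_{*}\Delta_i=(\cX_{i,s})_{\red}$. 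Since both divisors $h_i^{*}(K_{\cX_i}+(\cX_{i,s})_{\red})$ are semi-ample (hence nef) pullbacks and the two $dlt$-pairs have the same generic fiber, the standard negativity-lemma argument for birational minimal models (applied once on each side to the difference $\Delta_1-\Delta_2$, whose pushforward by either $h_i$ is zero over a suitable locus) forces $\Delta_1=\Delta_2$. This is precisely the crepant birational condition from the definition in \cite{Kol13}.

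\emph{Main obstacle.} The non-trivial content of the argument is entirely in invoking the right MMP results in the relative setting over $\cC$: termination of the relative $(K+D)$-MMP and existence of a good minimal model when the generic fiber has one, together with the negativity step in the crepant comparison. These results are by now part of the standard MMP toolkit but have been established only for $k$-schemes of finite type, which is precisely the reason the authors work over the smooth $k$-curve $\cC$ rather than directly over $\Spec R$. The delicate points are matching the precise hypotheses of the cited MMP theorems (relative projectivity, $\Q$-factoriality, semi-ampleness over the open curve $C$ versus over all of $\cC$) and verifying that compactifying to a proper $\cC$-model does not destroy the input needed to run the MMP.
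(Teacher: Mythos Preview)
Your plan is correct and matches the paper's proof: for \eqref{item:exist} both argue by restricting $K_{\cX}+(\cX_s)_{\red}$ to $X$ for the forward direction and, for the converse, start from a proper $snc$-model and invoke the Hacon--Xu existence theorem \cite[2.12]{HX} for good minimal models of $dlt$-pairs whose generic fibre already has one; for \eqref{item:crep} the paper simply cites \cite[3.52]{KM} via \cite{dFKX}, which is precisely the negativity-lemma comparison you sketch. The one point the paper spells out that you leave implicit is the verification of the finite-generation hypothesis in \cite[2.12]{HX}: after twisting by the principal divisor $\cY_s$, the pair $(\cY,(\cY_s)_{\red}-\tfrac{1}{m}\cY_s)$ is $klt$ for $m$ larger than the maximal multiplicity, so BCHM supplies finite generation of the relevant relative canonical ring.
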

\begin{proof}
\eqref{item:exist} The condition that $K_X$ is semi-ample over $C$
is obviously necessary, since for every $dlt$-model $\cX$ of $X$,
the divisor $K_X$ is $\Q$-linearly equivalent to the restriction
of $K_{\cX}+(\cX_s)_{\red}$ to $X$. Conversely, assume that $K_X$
is semi-ample over $C$, and let $\cY$ be a projective $snc$-model
of $X$. Then
 applying \cite[2.12]{HX} to the $dlt$-pair
 $(\cY,(\cY_s)_{\red})$, we see that $X$ has a good minimal
 $dlt$-model. Condition (1) of \cite[2.12]{HX} follows from our assumption, and condition (2)
   follows from the following observation. Let $m$ be a positive integer such that
  $m(K_{\cY}+(\cY_s)_{\red})$ is Cartier. Over a sufficiently small open neighbourhood of $s$ in $\cC$, we have an isomorphism of $\mathcal{O}_{\cC}$-algebras
 $$R(\cY/\cC,m(K_{\cY}+(\cY_s)_{\red}))\cong R(\cY/\cC,m(K_{\cY}+(\cY_s)_{\red})-\cY_s),$$
 where  $ R(\cY/\cC,L):=\bigoplus_{j\geq 0} \pi_*(\mathcal{O}_{\cY}(jL))$ with $\pi:\cY\to \cC$ the structural morphism.
 Thus it suffices to show that
 $$\mathcal{A}=R(\cY/\cC,m(K_{\cY}+(\cY_s)_{\red})-\cY_s)$$ is a finitely generated $\mathcal{O}_{\cC}$-algebra.
   If we denote by $M$ the maximum of
 the multiplicities of the components in $\cY_s$ then we may assume that $m> M$,
 so that
$$(\cY,(\cY_s)_{\red}-\frac{1}{m}\cY_{s})$$ is an effective $klt$ pair.
 Hence, the finite generation of $\mathcal{A}$ follows from \cite{BCHM}.

\eqref{item:crep} It is already observed in Definition 15 of
\cite{dFKX} that this follows from the proof of \cite[3.52]{KM}.
\end{proof}


\section{The essential skeleton}
\subsection{Retraction to the skeleton of an $snc$-model}
\sss Let $\cY$ be a connected regular flat separated $R$-scheme of
finite type such that the special fiber $\cY_k$ is a divisor with
strict normal crossings. Then, as explained in \cite[\S3.1]{MuNi},
one can associate to $\cY$ its skeleton $\Sk(\cY)$, which is a
topological subspace of the generic fiber $\widehat{\cY}_\eta$ of
 the formal $t$-adic completion of $\cY$. It is the set of points
 of $\widehat{\cY}_\eta$ that correspond to a real valuation
 on the function field of $\cY_K$ that is monomial with
 respect to the strict normal crossings divisor $\cY_k$.
 The skeleton $\Sk(\cY)$ is canonically homeomorphic to the
  dual intersection complex $\D((\cY_k)_{\red})$ of
 $\cY_k$. There exists a canonical continuous retraction
 $$\rho_{\cY}:\widehat{\cY}_\eta\to \Sk(\cY)$$
  which maps each point $x$ of $\widehat{\cY}_\eta$ to the unique point $x'$
  of $\Sk(\cY)$ such that $\red_{\cY}(x)$ is contained in the closure of $\red_{\cY}(x')$ and such that
  $|f(x)|=|f(x')|$ for every element $f$ in $\mathcal{O}_{\cY,\red_{\cY}(x)}$ that is invertible on the generic fiber $\cY_K$.
   Thus one can view $x'=\rho_{\cY}(x)$ as the ``best" approximation of $x$ by a monomial valuation on the pair $(\cY,\cY_k)$.
   Moreover,
 $\Sk(\cY)$ carries a canonical piecewise $\Z$-affine structure
 \cite[\S3.2]{MuNi}.

\sss We keep the notations from \eqref{sss:not}. For each
$\cC$-model $\cX$ of $X$, we define the skeleton of $\cX$ by
$$\Sk(\cX)=\Sk(\cX^{\snc}_R)\subset
\widehat{(\cX^{\snc}_R)}_\eta\subset X_K^{\an}$$ and we write
$\rho_{\cX}$ for $\rho_{\cX_R^{\snc}}$.
 If $\cX$ is a proper $snc$-model of $X$, one has the following
 crucial property.

\begin{theorem}\label{thm:deform}
If $\cX$ is a proper $snc$-model of $X$ over $\cC$, then there
exists a
 continuous map
$$H:[0,1]\times X_K^{\an}\to X_K^{\an}$$ such that
$H(0,\cdot)$ is the identity, $H(t,x)=x$ for all $x$ in $\Sk(\cX)$
and all $t$ in $[0,1]$, and $H(1,\cdot)=\rho_{\cX}$. Thus
$\Sk(\cX)$ is a strong deformation retract of $X_K^{\an}$.
\end{theorem}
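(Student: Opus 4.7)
The plan is to follow Berkovich's deformation-retraction construction for strictly polystable formal schemes from \cite{berk-contr}, adapted to the $snc$-situation as in \cite[\S3]{MuNi}.

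First, I would reduce to the formal generic fiber. Since $\cX$ is proper over $\cC$, the base change $\cX_R$ is proper over $R$, and hence the open immersion $\widehat{\cX_R}_{\eta}\hookrightarrow \cX_K^{\an}=X_K^{\an}$ is an isomorphism. Both $\rho_{\cX}$ and the sought homotopy $H$ are therefore naturally defined on $\widehat{\cX_R}_{\eta}$, and it suffices to construct $H$ there.

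Next, I would build the homotopy locally. Cover $\cX_R$ by Zariski opens $U$ admitting a regular system of parameters $(z_1,\ldots,z_n)$ and a unit $u$ with $t=u\prod z_i^{N_i}$. On such a chart the $t$-adic completion $\widehat{U}$ is a formal model of an open polydisc, and its Berkovich generic fiber carries Berkovich's standard one-parameter family of maps $H_U\colon [0,1]\times \widehat{U}_{\eta}\to \widehat{U}_{\eta}$: the map $H_U(s,\cdot)$ is defined by interpolating a seminorm $v$ between its original values (at $s=0$) and those of its associated monomial seminorm $v^{\mathrm{mon}}$ relative to $(z_1,\ldots,z_n)$ (at $s=1$), using the standard torus-rescaling formula. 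By construction $H_U(0,\cdot)=\mathrm{id}$, $H_U(1,\cdot)$ equals the monomial retraction and so agrees with $\rho_{\cX}$ on $\widehat{U}_{\eta}$, and every monomial valuation, in particular every point of the skeleton face contained in $\widehat{U}_{\eta}$, is fixed for all $s$.

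Finally, I would verify gluing. Two $snc$-parameter systems defining the same stratification of $\cX_s$ are related by transformations preserving the monomial structure, so the local maps $H_U$ agree on overlaps and patch to a continuous global homotopy $H\colon [0,1]\times X_K^{\an}\to X_K^{\an}$ with the required endpoints and fixed-point property. The main obstacle is exactly this gluing, together with joint continuity in $(s,x)$ across strata of $\cX_s$ of varying dimensions; this is the delicate content of Berkovich's argument in \cite{berk-contr} and its $snc$-specific refinement in \cite[\S3]{MuNi}, which I would invoke rather than reproduce from scratch.
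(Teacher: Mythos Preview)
Your route is genuinely different from the paper's. The paper does not build $H$ chart by chart and glue; instead it invokes Thuillier's strong deformation retraction for toroidal embeddings \cite[3.26]{thuillier}, applied to $X\hookrightarrow \cX$ regarded over the trivially valued field $k$. This yields a deformation retraction of the $k$-analytic space $\cX^{\beth}$ onto its toroidal skeleton $\mathcal{S}(\cX)$. The paper then observes that Thuillier's homotopy commutes with the structure map $\cX^{\beth}\to D$ to the open unit disc over $k$, and that the fiber of this map over the point $|t|=1/e$ is canonically $X_K^{\an}$, with the induced retraction equal to $\rho_{\cX}$. Restricting Thuillier's homotopy to that fiber gives $H$. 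No local patching is needed.

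Your approach via Berkovich's polystable retraction \cite{berk-contr} is valid in principle, since an $snc$-model over $R$ is strictly polystable, but your gluing paragraph is where the argument is incomplete. The assertion that two regular parameter systems adapted to the same $snc$ stratification yield the \emph{same} local flow $H_U$ is not correct as stated: Berkovich's torus-rescaling homotopy is not coordinate-independent in that naive sense, and making it intrinsic is exactly the substantial content of \cite{berk-contr}. Moreover, \cite[\S3]{MuNi} constructs only the time-$1$ retraction $\rho_{\cX}$ and its piecewise-affine compatibilities, not a homotopy, so it cannot be cited for the deformation. If you want to pursue this route, cite Berkovich's global result for polystable formal schemes directly rather than reassembling it from charts; otherwise the paper's Thuillier-plus-slicing reduction is shorter and avoids the issue entirely.
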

\begin{proof}
 A closely related result is proven in \cite[3.26]{thuillier}. We
 will explain how our statement can be deduced from that result.
 Following the notation in \cite{thuillier}, we denote by $\cX^{\beth}$ the $k$-analytic space associated to the
 toroidal embedding $X\hookrightarrow \cX$, where $k$ is endowed
 with the trivial absolute value. By definition,
 $\cX^{\beth}$ is the generic fiber of the formal $t$-adic
 completion $\widehat{\cX}$ of $\cX$, viewed as a special formal $k$-scheme by
 forgetting the $k\llbracket t\rrbracket $-structure \cite[\S1]{berk}.

 The relation between $\cX^{\beth}$ and $X_K^{\an}$ is explained in detail at the beginning
  of Section 4 in \cite{Ni-sing}; let us recall the main idea. Considering
 the morphism of special formal $k$-schemes $\widehat{\cX}\to \Spf
 k\llbracket t\rrbracket $ and passing to the generic fibers, we obtain a morphism
 of $k$-analytic spaces from $\cX^{\beth}$ to the open unit disc
 $D$ over $k$. We can identify the underlying topological space of $D$ with $[0,1[$
 by means of the homeomorphism
 $$D\to [0,1[\,:x\mapsto |t(x)|.$$ The residue field of $D$ at the point $1/e$ in $[0,1[$ is $K$ with
  our chosen $t$-adic absolute value $|\cdot|_K$, and the $K$-analytic
  space $X_K^{\an}$ is canonically isomorphic to the fiber of
  $\cX^{\beth}$ over $1/e$. Thus we can view $X_K^{\an}$ as the
  subspace of $\cX^{\beth}$ consisting of the points $x$ such that
  $|t(x)|=1/e$.

   In \cite[3.13]{thuillier}, Thuillier constructs a retraction
   $p_{\cX}$ of $\cX^{\beth}$ onto a certain subspace
   $\mathcal{S}(\cX)$, the skeleton of the toroidal embedding.
   Moreover, in \cite[3.26]{thuillier}, he shows that $p_{\cX}$
   can be extended to a strong deformation retraction $H$ of
   $\cX^{\beth}$ onto $\mathcal{S}(\cX)$. Going through the
   definitions, one observes that $p_{\cX}$ and $H$ commute with
   the morphism $\cX^{\beth}\to D$ and that the restriction of
   $$p_{\cX}:\cX^{\beth}\to \mathcal{S}(\cX)$$ over the point
   $1/e$ of $D$ is precisely the retraction
   $$\rho_{\cX}:X_K^{\an}\to \Sk(\cX).$$ Thus by restricting $H$
   over $1/e\in D$, we obtain a map that satisfies all the
   properties in the statement.
\end{proof}

\sss \label{sss:defret} Theorem \ref{thm:deform} can be extended
to the case where $X$ is defined over $K$ instead of $C$ and $\cX$
is a proper $snc$-model of $X$ over $R$. The general proof
technique is the same as in \cite{thuillier}, but one replaces the
formalism of toroidal embeddings by the more flexible language of
logarithmic geometry. Since the proof requires some technical
 preparations, we will present the details in a separate paper
\cite{Ni-log}. We will only use this generalization in the proof
of Theorem \ref{thm-form}.

\subsection{The skeleton of a good minimal $dlt$-model}
\sss In the following subsections, we will make use of the {\em
weight function}
 $$\weight_{\omega}:X_K^{\an}\to \R\cup \{+\infty\}$$ associated
 to a non-zero $m$-pluricanonical form on $X_K$, for any $m>0$.
 Its construction and main properties are described
 in \cite[4.4.5]{MuNi}. For us, its most important features are
 the following: if $\cX$ is an $snc$-model of $X$ over $\cC$ and $x$ is a
 point of $\Sk(\cX)$, then
 $$\weight_{\omega}(x)=v_x(\mathrm{div}_{\cX}(\omega)+m(\cX_s)_{\red})$$
 (here we use the notation recalled in \eqref{sss:notMuNi}). Moreover, for every point $y$ of $\widehat{\cX}_\eta$, we have
 $$\weight_{\omega}(y)\geq \weight_{\omega}(\rho_{\cX}(y))$$ with
 equality if and only if $y$ lies on $\Sk(\cX)$.

  \sss \label{sss:log} It will often be useful to interpret the weight function in
  terms of logarithmic differential forms. Let $\cY$ be a regular
  separated $R$-scheme of finite type such that $\cY_k$ is a
  divisor with strict normal crossings. We write $S^+$ for the log
  scheme associated to $R\setminus \{0\}\to R$ and $\cY^+$ for the
  log scheme obtained by endowing $\cY$ with the divisorial log
  structure associated to $\cY_k$. Then $\cY^+$ is log smooth over
  $S^+$. If we denote by $j:\cY_K\to \cY$ the natural open
  immersion, then a simple computation shows that the sub-$\mathcal{O}_{\cY}$-module $\omega_{\cY^+/S^+}$ of
 $j_*\omega_{\cY_K/K}$ is equal to $\omega_{\cY/R}((\cY_k)_{\red}-\cY_k)$
 (it suffices to check that these line bundles coincide at the generic
 points of the special fiber $\cY_k$). Thus if $\omega$ is an
 $m$-pluricanonical form on $X_K$ and $\cX$ is an $snc$-model of
 $X$ over $\cC$, then
 $$\weight_{\omega}(x)=v_x(\mathrm{div}_{\cX^+}(\omega))+m$$
 for every point $x$ of $\Sk(\cX)$, where we denote by
 $\mathrm{div}_{\cX^+}(\omega)$ the divisor on $\cX_R$ associated
 to $\omega$ viewed as a rational section of the line bundle
 $\omega^{\otimes m}_{\cX^+_R/S^+}$.

\begin{lemma}\label{lemm:dlt}  Let $\cX$ be a $dlt$-model of $X$
and let $h:\cY\to \cX$ be a log resolution of $(\cX,\cX_s)$.
Denote by $\Delta$ the log pullback of $(\cX_s)_{\red}$ to $\cY$.
Let $x$ be a point of $\Sk(\cY)$ such that $\red_{\cX}(x)$ does
not lie in $\cX^{\snc}$. Then $\Delta<(\cY_s)_{\red}$ locally at
$\red_{\cY}(x)$.
\end{lemma}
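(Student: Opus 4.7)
The plan is to argue by contradiction. Since $(\cX,(\cX_s)_{\red})$ is $dlt$, every irreducible component of $\cY_s$ has multiplicity at most $1$ in the log pullback $\Delta$, so $\Delta\leq (\cY_s)_{\red}$. Set $\eta:=\red_{\cY}(x)$; if the conclusion fails, then every irreducible component $E_i$ of $\cY_s$ passing through $\eta$ must have coefficient exactly $1$ in $\Delta$. Because $x$ lies in $\Sk(\cY)$, the reduction $\eta$ is the generic point of a stratum $\Sigma$ of $\cY_s$, that is, of an irreducible component of $\bigcap_{E_i\ni\eta}E_i$.

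The key step is to observe that $\Sigma$ is then a log canonical center of $(\cY,\Delta)$, and to transport this fact across $h$. Indeed, $\cY$ is regular and $\Delta$ is supported on the snc divisor $(\cY_s)_{\red}$ with coefficients in $[0,1]$, so $(\cY,\Delta)$ is $dlt$; under our hypothesis each $E_i\ni\eta$ lies in $\lfloor\Delta\rfloor$, so $\Sigma$ is an irreducible component of an intersection of components of $\lfloor\Delta\rfloor$, hence a log canonical center of $(\cY,\Delta)$ by \cite[4.16]{Kol13}. The morphism $h\colon(\cY,\Delta)\to(\cX,(\cX_s)_{\red})$ is crepant by the very definition of the log pullback, so log discrepancies of divisorial valuations are preserved: any valuation realizing $\Sigma$ as a log canonical center of $(\cY,\Delta)$ has log discrepancy zero with respect to $(\cX,(\cX_s)_{\red})$ as well, and it is centered at the generic point of $h(\Sigma)$. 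Consequently $h(\Sigma)$ is a log canonical center of $(\cX,(\cX_s)_{\red})$.

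To finish, the description of log canonical centers of a $dlt$-model recalled earlier in the text shows that every such center is the closure in $\cX_s$ of a connected component of an intersection of irreducible components of $\cX^{\snc}_s$; in particular its generic point lies in $\cX^{\snc}$. Since $h|_{\Sigma}$ is dominant onto $h(\Sigma)$, the image $h(\eta)$ of the generic point of $\Sigma$ equals the generic point of $h(\Sigma)$ and therefore lies in $\cX^{\snc}$, contradicting the hypothesis that $\red_{\cX}(x)=h(\eta)\notin \cX^{\snc}$. The step I expect to be the main obstacle is the transfer of log canonical centers along the crepant morphism $h$: namely, ensuring that $h(\Sigma)$ itself --- and not merely some larger subvariety containing it --- is a log canonical center of $(\cX,(\cX_s)_{\red})$. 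This rests on the invariance of log discrepancies under crepant pullback together with the compatibility of centers of divisorial valuations with $h$, both of which are standard but need to be invoked carefully.
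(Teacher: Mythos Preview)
Your argument is correct and is, at bottom, the same computation as the paper's, but packaged through the language of log canonical centers rather than an explicit blow-up. The paper proceeds concretely: since $x\in\Sk(\cY)$, the closure $\Sigma$ of $\red_{\cY}(x)$ is a stratum of the strict normal crossings divisor $\cY_s$, so the blow-up $h':\cY'\to\cY$ along $\Sigma$ is again an $snc$-model; the exceptional divisor $E$ has image in $\cX$ equal to the closure of $\red_{\cX}(x)$, hence disjoint from $\cX^{\snc}$, and the second formulation of $dlt$ in \eqref{sss:dltdef} then forces the coefficient of $E$ in the log pullback from $\cX$ to be strictly less than $1$; on the other hand the log pullback of $(\cY_s)_{\red}$ to $\cY'$ is $(\cY'_s)_{\red}$, so $E$ appears there with coefficient $1$, and the desired inequality follows. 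Your route abstracts this: the valuation you invoke to exhibit $\Sigma$ as a log canonical center is precisely $\ord_E$, and your crepancy step is the observation that its log discrepancy for $(\cX,(\cX_s)_{\red})$ equals that for $(\cY,\Delta)$. What your framing buys is that the transfer to $\cX$ and the conclusion about $\cX^{\snc}$ become direct consequences of the general description of log canonical centers of a $dlt$-pair, with no explicit coefficient computation; what the paper's framing buys is that it avoids any appeal to that description and works straight from the definition. One small remark: your assertion that $(\cY,\Delta)$ is globally $dlt$ tacitly assumes $\Delta\geq 0$, which is not guaranteed for an arbitrary log resolution, but this is immaterial since near $\eta$ you have $\Delta=(\cY_s)_{\red}$ by hypothesis, and that is all the argument uses.
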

\begin{proof}
By the definition of a $dlt$-model, we know that $\Delta\leq
(\cY_s)_{\red}$. Thus it suffices to show that these divisors are
different locally at $\red_{\cY}(x)$. Since $x$ lies on
$\Sk(\cY)$,
 its reduction $\red_{\cY}(x)$ is a generic point of the intersection of the
irreducible components of $\cY_s$ that contain $\red_{\cY}(x)$.
Thus if we denote by $h':\cY'\to \cY$ the blow-up of $\cY$ at the
closure of $\red_{\cY}(x)$, then $\cY'$ is again an
 $snc$-model of $X$.

  We denote by $\Delta'$ the log pullback of
$\Delta$ to $\cY'$. The image of the exceptional divisor $E$ of
$h'$ in $\cX$ is the closure of $\red_{\cX}(x)=h(\red_{\cY}(x))$
and thus disjoint from $\cX^{\snc}$. By the definition of a
$dlt$-model, we know that the multiplicity of $E$ in $\Delta'$ is
strictly smaller
 than $1$. Since the
log pullback of $(\cY_s)_{\red}$ to $\cY'$ is equal to
 $(\cY'_s)_{\red}$, we see that $\Delta<(\cY_s)_{\red}$ locally at
$\red_{\cY}(x)$.
\end{proof}

\begin{prop}\label{prop-min}
Let $\cX$ be a $dlt$-model of $X$ over $\cC$, let $\cY$ be a
proper $snc$-model of $X$ over $\cC$ and let $h:\cY\to \cX$ be a
morphism of $\cC$-models. Denote by $\Delta$ the log pullback of
$(\cX_s)_{\red}$ to $\cY$. If we set
$$S=\{x\in \Sk(\cY)\,|\,v_x(\Delta)=v_x((\cY_s)_{\red}) \}$$ then
$\Sk(\cX)=S$.
\end{prop}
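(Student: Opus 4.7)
The plan is to prove both inclusions simultaneously via the weight function of a carefully chosen local pluricanonical form. Choose a non-vanishing local section $\omega$ of the log canonical bundle $\omega_{(\cX^{\snc})^+/S^+}$ on a Zariski open neighborhood $\cU$ of a chosen point of $\cX^{\snc}$; this defines a rational $1$-pluricanonical form on $X_K$. Combining the log pullback identity $K_\cY+\Delta\sim_\Q h^*(K_\cX+(\cX_s)_{\red})$ with the description $\omega_{\cY^+/S^+}=\omega_{\cY/R}((\cY_s)_{\red}-\cY_s)$ from \eqref{sss:log} (and its analogue on $\cX^{\snc}$), together with the identity $h^*\cX_s = \cY_s$, a short bookkeeping of line bundles yields the local identity
\[
\omega_{\cY^+/S^+}=h^*\omega_{(\cX^{\snc})^+/S^+}\big((\cY_s)_{\red}-\Delta\big)
\]
on $h^{-1}(\cU)$, so that the pullback $h^*\omega$, viewed as a section of $\omega_{\cY^+/S^+}$, has divisor $(\cY_s)_{\red}-\Delta$ near any preimage of the chosen point.

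For the inclusion $S\subseteq \Sk(\cX)$, let $x\in S$. Lemma~\ref{lemm:dlt}, applied contrapositively (using that $v_x$ gives strictly positive weight to each component of $\cY_s$ through $\red_\cY(x)$), yields $\red_\cX(x)\in \cX^{\snc}$; hence $x$ lies in $\widehat{\cX^{\snc}_R}_\eta$ and I may choose $\omega$ to be a local generator near $\red_\cX(x)$. Using the formula in \eqref{sss:log} on $\Sk(\cY)$,
\[
\weight_\omega(x)=v_x((\cY_s)_{\red}-\Delta)+1=v_x((\cY_s)_{\red})-v_x(\Delta)+1=1,
\]
by the defining condition of $S$. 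On the other hand, letting $x'=\rho_{\cX^{\snc}_R}(x)\in \Sk(\cX)$, the same $\omega$ remains a local generator of $\omega_{(\cX^{\snc})^+/S^+}$ near $\red_\cX(x')$, so $\weight_\omega(x')=v_{x'}(\mathrm{div}_{(\cX^{\snc})^+}(\omega))+1=1$. The general inequality $\weight_\omega(x)\geq \weight_\omega(\rho_{\cX^{\snc}_R}(x))$, with equality exactly on $\Sk(\cX^{\snc}_R)$, then forces $x\in \Sk(\cX)$.

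For the reverse inclusion $\Sk(\cX)\subseteq S$, take $x\in \Sk(\cX)=\Sk(\cX^{\snc}_R)$. I first claim $x\in \Sk(\cY)$: this is a standard compatibility fact for snc-skeleta under proper birational morphisms of snc-models (here applied to $h^{-1}(\cX^{\snc}_R)\to \cX^{\snc}_R$), equivalent to the statement that a monomial valuation on $\cX^{\snc}_R$ centered at a stratum of $\cX^{\snc}_s$ is monomial on $\cY$ and centered at a stratum of $\cY_s$. Granted this, both expressions for $\weight_\omega(x)$ apply: from the $\cX^{\snc}$-side $\weight_\omega(x)=v_x(\mathrm{div}_{(\cX^{\snc})^+}(\omega))+1=1$, while from the $\cY$-side $\weight_\omega(x)=v_x((\cY_s)_{\red})-v_x(\Delta)+1$. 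Equating yields $v_x(\Delta)=v_x((\cY_s)_{\red})$, so $x\in S$.

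The main technical step is the line-bundle identity displayed above, which is a direct bookkeeping computation combining the log pullback formula with \eqref{sss:log}. The more substantive ingredient, and in my view the main potential obstacle, is the containment $\Sk(\cX)\subseteq \Sk(\cY)$ used in the second direction; this requires showing that monomiality of a valuation is preserved under proper birational modifications of snc-models, which I would establish by direct analysis of centers and local monomial coordinates under $h$.
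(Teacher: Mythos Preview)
Your proof is correct and essentially identical to the paper's: both compare $\weight_\omega(x)$ and $\weight_\omega(\rho_\cX(x))$ for a local generator $\omega$ of the (log) canonical bundle near $\red_\cX(x)$, invoking Lemma~\ref{lemm:dlt} to reduce to $\red_\cX(x)\in\cX^{\snc}$ and then using the equality case of \cite[4.4.5]{MuNi} to characterize $\Sk(\cX)$ inside $\Sk(\cY)$. The only differences are cosmetic---you use the log canonical bundle (normalizing the weight to $1$) rather than $\omega_{\cX^{\snc}/\cC}$, and you treat the two inclusions separately---and the containment $\Sk(\cX)\subseteq\Sk(\cY)$ you flag as the main potential obstacle is dispatched in the paper by a direct citation to \cite[3.1.7]{MuNi}.
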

\begin{proof} Applying  \cite[3.1.7]{MuNi} to the proper morphism $h^{-1}(\cX^{\snc})\to \cX^{\snc}$, we see that $\Sk(\cX)$
is contained in $\Sk(\cY)$. Moreover, it follows from
 Lemma \ref{lemm:dlt} that for every point $x$ of $S$, the reduction
 $\red_{\cX}(x)$ must be contained in
$\cX^{\snc}$.
  Now let $x$ be any point in $\Sk(\cY)$ such that $\red_{\cX}(x)$ lies in $\cX^{\snc}$.
 We must show that $v_x(\Delta)=v_x((\cY_s)_{\red})$ if and only if $x$ lies in $\Sk(\cX)$, or, equivalently, $x$ is equal to its projection
 $$x'=\rho_{\cX}(x)$$ to the skeleton of $\cX$. Let $\omega$ be a local generator of $\omega_{\cX^{\snc}/\cC}$ at $\red_{\cX}(x)$.
 It induces a rational section of the canonical bundle
 $\omega_{X_K/K}$ by base change.
 By
 \cite[4.4.5]{MuNi}, we know that $x=x'$ if and only if
 $$\weight_{\omega}(x)= \weight_{\omega}(x').$$
  Since
  the divisor of $\omega$ is zero in a neighbourhood of
  $\red_{\cX}(x')$, we have
  $$\weight_{\omega}(x')=v_{x'}((\cX_s)_{\red})=v_{x}((\cX_s)_{\red}).$$
   On the other hand, computing $\weight_{\omega}(x)$ on the model $\cY$ we get
   $$\weight_{\omega}(x)=v_x(\mathrm{div}_{\cY}(\omega)+(\cY_s)_{\red})=v_x((\cX_s)_{\red})+v_x((\cY_s)_{\red}-\Delta).$$
     Thus we see that $\Sk(\cX)=S$.
\end{proof}

\begin{cor}\label{cor-crep}
Let $\cX_1$ and $\cX_2$ be two $dlt$-models of $X$ over $\cC$. If
$\cX_1$ and $\cX_2$ are crepant birational, then
$\Sk(\cX_1)=\Sk(\cX_2)$.
\end{cor}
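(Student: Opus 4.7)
The plan is to reduce this immediately to Proposition \ref{prop-min}. By the definition of crepant birational models given just before Theorem \ref{thm-mindlt}, there is a normal proper $\cC$-model $\cY$ of $X$ together with morphisms $f_i:\cY\to\cX_i$ for $i=1,2$ such that the log pullbacks $\Delta_1$ and $\Delta_2$ of $(\cX_{1,s})_{\red}$ and $(\cX_{2,s})_{\red}$ to $\cY$ coincide. The remark after the definition says that we may assume $\cY$ is an $snc$-model, by further composing with a log resolution of $(\cY,\cY_s)$ (and noting that log pullback is compatible with composition of birational morphisms, so the coincidence of log pullbacks is preserved).

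Once we have such an $snc$-model $\cY$ dominating both $\cX_1$ and $\cX_2$, Proposition \ref{prop-min} applied to $f_i:\cY\to\cX_i$ gives
\[
\Sk(\cX_i)=\{x\in\Sk(\cY)\mid v_x(\Delta_i)=v_x((\cY_s)_{\red})\}
\]
for $i=1,2$. Since $\Delta_1=\Delta_2$ as $\Q$-divisors on $\cY$, the defining condition on the right-hand side is identical for $i=1$ and $i=2$, and hence $\Sk(\cX_1)=\Sk(\cX_2)$.

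The only point requiring any care is the reduction to a common $snc$-dominant model. The issue is to check that if we take a log resolution $g:\cY'\to\cY$, then the log pullbacks of $(\cX_{i,s})_{\red}$ to $\cY'$ still coincide; but this is immediate because log pullback is defined by the condition $K_{\cY'}+\Delta'_i\sim_\Q g^*(K_\cY+\Delta_i)$ together with $g_*\Delta'_i=\Delta_i$, so $\Delta_1=\Delta_2$ forces $\Delta'_1=\Delta'_2$. With that verified, the corollary is an immediate consequence of Proposition \ref{prop-min}, and there is no genuine obstacle to overcome.
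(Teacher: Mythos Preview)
Your proof is correct and follows exactly the approach the paper intends: the paper's proof is the single line ``This follows immediately from Proposition \ref{prop-min},'' and what you have written is precisely the unpacking of that sentence, including the reduction to a common proper $snc$-model dominating both $\cX_1$ and $\cX_2$ (which the paper already flags in the remark after the definition of crepant birational). There is nothing different in substance; you have simply made explicit the transitivity of log pullback under the further log resolution and then applied Proposition \ref{prop-min} twice.
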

\begin{proof}
This follows immediately from Proposition \ref{prop-min}.
\end{proof}

\sss Corollary \ref{cor-crep} implies, in particular, that the
 skeleta $\Sk(\cX_1)=\Sk(\cX_2)$ are isomorphic as topological
spaces with piecewise affine structure, by \cite[\S3.2]{MuNi}.
Since $\Sk(\cX_i)$ is canonically homeomorphic to the dual complex
associated to the reduced special fiber of $\cX_i$,  for $i=1,2$,
this also follows from Proposition 11 in \cite{dFKX}, whose proof
relies on Weak Factorization. The proofs of Corollary
\ref{cor-crep} and \cite[\S3.2]{MuNi} do not use Weak
Factorization.

\begin{cor}\label{cor-mindlt}
 If $K_X$ is semi-ample, then the skeleton of a good minimal
 $dlt$-model of $X$ does not depend on the choice of the good minimal
 $dlt$-model.
\end{cor}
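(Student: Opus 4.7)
The plan is to derive this corollary as an immediate formal consequence of the two main results already proved in the preceding pages, with essentially no new input. Given two good minimal $dlt$-models $\cX_1$ and $\cX_2$ of $X$ over $\cC$, I first invoke Theorem \ref{thm-mindlt}\eqref{item:crep}, which produces, as part of the definition of crepant birationality, a normal proper $\cC$-model $\cY$ of $X$ with morphisms $f_i\colon\cY\to\cX_i$ such that the log pullbacks $\Delta_i$ of $(\cX_{i,s})_{\red}$ to $\cY$ coincide: $\Delta_1=\Delta_2$.

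Next, I pass to an $snc$-model by taking a log resolution $\cY'\to\cY$ of $(\cY,\cY_s)$; this is a legitimate step since the remark following the definition of crepant birationality in the subsection on $dlt$-models explicitly says we may assume $\cY$ is an $snc$-model, and the log pullbacks pull back compatibly. Then Proposition \ref{prop-min} applied to each of the two composed maps $\cY'\to\cX_i$ identifies $\Sk(\cX_i)$ with the subset
\[
S_i=\{x\in\Sk(\cY')\,|\,v_x(\Delta_i)=v_x((\cY'_s)_{\red})\}\subset\Sk(\cY').
\]
Since the two log pullbacks agree on $\cY'$, we have $S_1=S_2$, and therefore $\Sk(\cX_1)=\Sk(\cX_2)$ as subspaces of $X_K^{\an}$. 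Equivalently, one can simply cite Corollary \ref{cor-crep} in one line.

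There is no substantive obstacle here: the entire content is packaged into Theorem \ref{thm-mindlt}\eqref{item:crep} (which rests on MMP and is the reason for the semi-ampleness hypothesis on $K_X$, ensuring a good minimal $dlt$-model exists in the first place by Theorem \ref{thm-mindlt}\eqref{item:exist}) and into Proposition \ref{prop-min}, which translates crepant equivalence into equality of skeleta by comparing valuations of log pullbacks. Thus the role of the proof is just to assemble these two inputs.
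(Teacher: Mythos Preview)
Your proposal is correct and takes essentially the same approach as the paper: the paper's proof is simply ``This follows from Theorem \ref{thm-mindlt}\eqref{item:crep} and Corollary \ref{cor-crep},'' and your argument just unpacks the one-line proof of Corollary \ref{cor-crep} (which is itself immediate from Proposition \ref{prop-min}). You even note yourself that one can cite Corollary \ref{cor-crep} directly, which is precisely what the paper does.
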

\begin{proof}
This follows from Theorem \ref{thm-mindlt}\eqref{item:crep} and
Corollary \ref{cor-crep}.
\end{proof}

\begin{theorem}\label{thm-collapse}
Assume that $X$ is projective over $C$ and $K_X$ is semi-ample
over $C$. If $\cX$ is a good minimal $dlt$-model of $X$ and $\cY$
is any projective $dlt$-model of $X$, then $\Sk(\cX)$ is contained
in $\Sk(\cY)$. Moreover, $\Sk(\cX)$ can be obtained from
$\Sk(\cY)$ (as a topological subspace of $\Sk(\cY)$ with piecewise
affine structure) by a finite number of elementary collapses.
\end{theorem}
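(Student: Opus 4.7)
My plan is to run the Minimal Model Program on $\cY$ all the way down to a good minimal $dlt$-model and analyze each intermediate step. By passing to a small $\Q$-factorial modification (which is a $dlt$-model with the same reduced special fiber support, hence the same skeleton by Proposition~\ref{prop-min}), I may assume $\cY$ is $\Q$-factorial. I then run a $(K_{\cY}+(\cY_{s})_{\red})$-MMP over $\cC$ relative to $s$, with scaling of an appropriate ample divisor. By the semi-ampleness of $K_X$, the finite generation argument used in the proof of Theorem~\ref{thm-mindlt}\eqref{item:exist} (via \cite{HX} and \cite{BCHM}) applies and shows that this MMP terminates at a good minimal $dlt$-model $\cY'$. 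By Theorem~\ref{thm-mindlt}\eqref{item:crep} and Corollary~\ref{cor-crep}, $\Sk(\cY')=\Sk(\cX)$, so it is enough to track the skeleton through each MMP step.

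\textbf{Analysis of each MMP step.} I split into two cases. A \emph{flip} is a small birational modification: it leaves the set of irreducible components of the reduced special fiber intact (so the vertices of the dual complex are unchanged), and by taking a common resolution of the two sides, combined with the crepancy of the flip and Proposition~\ref{prop-min}, one checks that the log pullback of the reduced special fiber is the same on either side, so the skeleton is preserved. A \emph{divisorial contraction} $c:\cY_i\to\cY_{i+1}$ contracts an irreducible divisor $E$; since $K_X$ is semi-ample, $E$ cannot dominate the generic fiber, and the $(K+D)$-negativity forces $E$ to be a component of $(\cY_{i,s})_{\red}$, corresponding to a vertex $v_E$ of $\D((\cY_{i,s})_{\red})$. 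Using the extremal ray theory for $dlt$-pairs and the negativity lemma applied on a common resolution of $\cY_i$ and $\cY_{i+1}$, I would verify that the subcomplex of cells incident to $v_E$ admits a free face, so that its removal realizes a finite sequence of elementary collapses; this is essentially the combinatorial content of \cite[\S19]{dFKX}, which I would invoke or lightly adapt. Stacking these collapses along the finite MMP yields the collapse statement, and the inclusion $\Sk(\cX)\subseteq\Sk(\cY)$ follows immediately since flips preserve and divisorial contractions only shrink the skeleton.

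\textbf{Alternative direct argument for the inclusion.} One can also establish $\Sk(\cX)\subseteq\Sk(\cY)$ directly without MMP, which may be useful as a warm-up or cross-check. Take a common log resolution $h:\cZ\to\cX$ factoring through $g:\cZ\to\cY$, and let $\Delta_{\cX}$, $\Delta_{\cY}$ be the log pullbacks of $(\cX_s)_{\red}$ and $(\cY_s)_{\red}$ to $\cZ$. For $x\in\Sk(\cX)$, the reduction $\red_{\cX}(x)$ lies in $\cX^{\snc}$, and (after multiplying by a suitable integer to trivialize the $\Q$-Cartier obstructions) one selects a local generator $\omega$ of $\omega_{\cX^{+}/S^{+}}^{\otimes m}$ at $\red_{\cX}(x)$, viewed as a rational $m$-pluricanonical form on $X_K$. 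Computing $\weight_{\omega}(x)$ on the two models using the logarithmic formula of~\eqref{sss:log} yields the identity $v_x((\cZ_s)_{\red}-\Delta_{\cY})=v_x((\cX_s)_{\red}-\Delta_{\cX})$ along the relevant components, and since the right-hand side vanishes at $x\in\Sk(\cX)$ by Proposition~\ref{prop-min}, so does the left, giving $x\in\Sk(\cY)$.

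\textbf{Main obstacle.} The technically delicate point is verifying that each divisorial contraction realizes an elementary collapse on the dual complex. One must show that removing the vertex $v_E$ together with its incident cells corresponds to removing a free face, which requires controlling precisely which log canonical centers of $(\cY_{i,s})_{\red}$ meeting $E$ survive in $\cY_{i+1}$. This is where the full strength of the negativity lemma and the $dlt$ extremal ray theory enters, and it is the combinatorial heart shared with the analysis carried out in \cite{dFKX}.
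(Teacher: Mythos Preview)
Your strategy matches the paper's proof: by Corollary~\ref{cor-mindlt} one may replace $\cX$ by the output of an MMP run on $(\cY,(\cY_s)_{\red})$, and then the collapse statement is precisely Corollary~22 of \cite{dFKX} (together with \S31 there when $k$ is not algebraically closed). The paper compresses the step-by-step analysis you sketch into that single citation.

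One point in your sketch needs correction: flips are \emph{not} crepant. For a flip $\cY_i\dashrightarrow\cY_{i+1}$, the log pullbacks of $(\cY_{i,s})_{\red}$ and $(\cY_{i+1,s})_{\red}$ to a common resolution differ over the flipping locus, by the negativity lemma; this is essentially the definition of a flip. Consequently a flip can genuinely alter the dual complex by destroying strata, not merely preserve it. The analysis in \cite{dFKX} treats flips and divisorial contractions on an equal footing and shows that \emph{each} MMP step induces at worst a finite sequence of elementary collapses on the dual complex. Since you ultimately defer the combinatorial heart to \cite{dFKX}, the overall argument survives, but the informal description of the flip case should be amended accordingly.
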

\begin{proof}
For the definition of an elementary collapse in a simplicial
topological space, we refer to Definition 18 in \cite{dFKX}. By
 Corollary
\ref{cor-mindlt}, we can assume that the good minimal $dlt$-model
$\cX$ is the result of running MMP for $(\cY,(\cY_s)_{\red})$. Now
the statement follows from Corollary 22 in \cite{dFKX}. When $k$
is not algebraically closed, see also \S31 in \cite{dFKX}.
\end{proof}

\begin{cor}\label{cor-sdr}
Assume that $X$ is projective over $C$ and that $K_X$ is
semi-ample over $C$. If $\cX$ is a good minimal $dlt$-model of
$X$, then $\Sk(\cX)$ is a strong deformation retract of
$X_K^{\an}$.
\end{cor}
\begin{proof}
Let $\cY\to \cX$ be a log resolution of $(\cX,\cX_s)$. By Theorem
\ref{thm-collapse}, the skeleton $\Sk(\cX)$ is a strong
deformation retract of $\Sk(\cY)$. By Theorem \ref{thm:deform},
$\Sk(\cY)$ is a strong deformation retract of $X_K^{\an}$.
\end{proof}

\subsection{Kontsevich-Soibelman skeleta}

\sss In \cite[\S4.5]{MuNi}, Musta\c{t}\u{a} and the first-named
author associated to every non-zero regular pluricanonical form
$\omega$ on $X_K$ a skeleton $\Sk(X_K,\omega)$ in $X_K^{\an}$,
generalizing a construction of Kontsevich and Soibelman
\cite{KoSo}.  The skeleton $\Sk(X_K,\omega)$ is
 precisely the locus of points of $X_K^{\an}$ where
 the weight function $\weight_{\omega}$ reaches its minimal value.
 If $\cX$ is any $snc$-model of $X$ over $\cC$, then
$\Sk(X_K,\omega)$ is a union of closed faces of $\Sk(\cX)$, which
can be explicitly computed \cite[4.5.5]{MuNi}. Taking the union of
the skeleta $\Sk(X_K,\omega)$ over all non-zero pluricanonical
forms $\omega$ on $X_K$, one obtains a topological subspace
$\Sk(X_K)$ of $X_K^{\an}$ that was called the essential skeleton
of $X_K$ in \cite[4.6.2]{MuNi}. It is an interesting birational
invariant of $X_K$. In this subsection, we will compare the
essential skeleton to the skeleton of a good minimal $dlt$-model
of $X$.

\begin{prop}\label{prop-KS} Assume that $K_X$ is semi-ample over $C$ and let $\cX$ be a $dlt$-model of $X$.
For every integer $m>0$ and every non-zero $m$-pluricanonical form
$\omega$ on $X_K$, we have
$$\Sk(X_K,\omega)\subset \Sk(\cX).$$
\end{prop}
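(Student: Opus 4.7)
The plan is to exploit a log resolution of $(\cX,\cX_s)$ and decompose the weight function along it via the log pullback formula, then combine this with the weight-decreasing property of the natural retractions onto the skeleton of an $snc$-model.

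First take a log resolution $h\colon\cY\to\cX$, so that $\cY$ is a proper $snc$-model of $X$ over $\cC$ and $\Sk(\cX)\subset\Sk(\cY)$ by Proposition \ref{prop-min}. By Theorem \ref{thm:deform} combined with the weight-decreasing property $\weight_{\omega}(y)\ge\weight_{\omega}(\rho_\cY(y))$ recalled above (with equality iff $y\in\Sk(\cY)$), every minimizer $x\in\Sk(X_K,\omega)$ already lies in $\Sk(\cY)$. By Proposition \ref{prop-min} it then suffices to show that such a minimizer satisfies $v_x(\Delta)=v_x((\cY_s)_{\red})$, where $\Delta$ denotes the log pullback of $(\cX_s)_{\red}$ to $\cY$.

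The central calculation is to express $\weight_{\omega}$ on $\Sk(\cY)$ in terms of $\Delta$. After replacing $m$ by a divisible multiple (which only rescales weights by a positive factor), the line bundle $L:=\mathcal{O}_{\cX_R}(m(K_\cX+(\cX_s)_{\red}))$ is Cartier. Let $\tilde{\omega}$ denote $\omega$ viewed as a rational section of $L$, and set $D_\cX:=\mathrm{div}_\cX(\tilde{\omega})$. The linear equivalence $K_\cY+\Delta\sim_\Q h^{*}(K_\cX+(\cX_s)_{\red})$ yields $\mathrm{div}_\cY(\omega)=h^{*}D_\cX-m\Delta$, and hence for $x\in\Sk(\cY)$:
\begin{equation*}
\weight_{\omega}(x)\;=\;v_x(h^{*}D_\cX)\,+\,m\,v_x\bigl((\cY_s)_{\red}-\Delta\bigr).
\end{equation*}
The second summand is non-negative on $\Sk(\cY)$ by the $dlt$ inequality $\Delta\le(\cY_s)_{\red}$, and its vanishing locus equals $\Sk(\cX)$ by Proposition \ref{prop-min}.

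To conclude, I split a minimizer $x\in\Sk(X_K,\omega)$ into two cases. If $\red_\cX(x)\in\cX^{\snc}$, then $x$ belongs to the analytic domain $\widehat{\cX^{\snc}}_\eta$, and the weight-decreasing retraction $\rho_{\cX^{\snc}}$ (the formula recalled above applied to the $snc$-model $\cX^{\snc}$) satisfies $\weight_{\omega}(x)\ge\weight_{\omega}(\rho_{\cX^{\snc}}(x))$ with equality iff $x\in\Sk(\cX)$; minimality of $x$ forces equality, so $x\in\Sk(\cX)$. Otherwise $\red_\cX(x)\notin\cX^{\snc}$, and Lemma \ref{lemm:dlt} gives $v_x((\cY_s)_{\red}-\Delta)>0$, so one must exhibit a point $x'\in\Sk(\cX)$ with $\weight_{\omega}(x')\le v_x(h^{*}D_\cX)<\weight_{\omega}(x)$ to contradict minimality. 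This last step is the main obstacle: after reducing to a good minimal $dlt$-model via Theorem \ref{thm-collapse} so that $K_\cX+(\cX_s)_{\red}$ is semi-ample, one uses semi-ampleness to bound the function $y\mapsto v_y(h^{*}D_\cX)$ on $\Sk(\cY)$ below by its values on $\Sk(\cX)$, for instance by constructing $x'$ from $x$ by ``turning off'' the coordinates on components $E_j$ with $\Delta_j<1$ and renormalizing so as not to increase $v_{\bullet}(h^{*}D_\cX)$, using that $D_\cX$ is pulled back from $\cX$.
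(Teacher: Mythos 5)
Your framework coincides with the paper's: the dichotomy on whether $\red_{\cX}(x)$ lies in $\cX^{\snc}$, the identity $\mathrm{div}_{\cY}(\omega)=h^{*}D_{\cX}-m\Delta$ coming from the definition of the log pullback, and the resulting decomposition $\weight_{\omega}(x)=v_x(h^{*}D_{\cX})+m\,v_x((\cY_s)_{\red}-\Delta)$ on $\Sk(\cY)$ are exactly the ingredients used there, and your first case is complete. The problem is that the second case --- which is the entire content of the proposition --- is not proved. You correctly get $\weight_{\omega}(x)>v_x(h^{*}D_{\cX})$ from Lemma \ref{lemm:dlt} when $\red_{\cX}(x)\notin\cX^{\snc}$, but the existence of a competitor of weight at most $v_x(h^{*}D_{\cX})$ is precisely what you flag as ``the main obstacle'' and then only gesture at. The sketch does not close it: the reduction to a good minimal model via Theorem \ref{thm-collapse} is logically admissible (if the inclusion holds for one good minimal $dlt$-model, that theorem propagates it to every $dlt$-model), but it leaves Case 2 intact; the ``turning off'' construction is problematic because all components through $\red_{\cY}(x)$ may have multiplicity $<1$ in $\Delta$ (so nothing survives), and because the renormalization needed to keep $v(t)=1$ rescales $v_{\bullet}(h^{*}D_{\cX})$ upward, so it is not clear the value does not increase; and semi-ampleness in fact plays no role in the paper's argument for this proposition.

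The missing step is a purely local comparison at $\red_{\cX}(x)$. Choose an irreducible component $E$ of $\cX^{\snc}_s$ whose closure contains $\red_{\cX}(x)$, let $\xi$ be its generic point and let $x'$ be the unique point of $\red_{\cX}^{-1}(\xi)$; then $x'\in\Sk(\cX)$. Writing $\omega=g\theta$ near $\red_{\cX}(x)$ for a local generator $\theta$ of $\mathcal{L}$, one has $\weight_{\omega}(x')=-\ln|g(x')|$, because $\theta$ restricts to a local generator of $\omega_{\cX^{\mathrm{reg}}_R/R}((\cX^{\mathrm{reg}}_s)_{\red})^{\otimes m}$ at $\xi$; combining this with the valuative inequality $|g(x')|\geq|g(x)|$, for which the paper invokes $\Q$-factoriality of $\cX$, yields
$$\weight_{\omega}(x')\leq-\ln|g(x)|=v_x(h^{*}D_{\cX})<\weight_{\omega}(x),$$
contradicting the minimality of $\weight_{\omega}$ at $x$. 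This explicit competitor $x'$, together with the inequality comparing $v_{x'}$ and $v_x$ on the local ring at $\red_{\cX}(x)$, is the single idea your proposal is missing.
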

\begin{proof}
  Let $x$ be a point
 of $\Sk(X_K,\omega)$. If $\red_{\cX}(x)$ is contained in
 $\cX^{\snc}$, then $x$ lies in
 $\widehat{\cX}_\eta$ and \cite[4.4.5]{MuNi} implies that $x$
 must lie in $\Sk(\cX)$, since the restriction of $\weight_{\omega}$ to $\widehat{\cX}_\eta$ can reach its minimal values only at points of $\Sk(\cX)$.

 Now suppose that
 $\red_{\cX}(x)$ is
 not contained in $\cX^{\snc}$. We will deduce a contradiction with the
 assumption that $x$ belongs to $\Sk(X_K,\omega)$.
 Replacing $\omega$ by its $d$-fold tensor power $\omega^{\otimes
d}$, with $d$ a positive integer, has no influence on the skeleton
$\Sk(X_K,\omega)$. Thus we may assume that the divisor
$$mK_{\cX}+m(\cX_s)_{\red}$$ is Cartier on $\cX$ and we denote by
$\mathcal{L}$ the associated line bundle. We choose a local
generator $\theta$ of $\mathcal{L}$ at the point $\red_{\cX}(x)$.
Note that the pullback of $\mathcal{L}$ to the regular locus
$\cX^{\mathrm{reg}}_R$ of $\cX_R$ is isomorphic to
$$\omega_{\cX^{\mathrm{reg}}_R/R}((\cX^{\mathrm{reg}}_s)_{\red})^{\otimes m}.$$
 We fix such an isomorphism. Then we can view $\omega$ as a
rational section of $\mathcal{L}$ and write $\omega=g\theta$
locally at $\red_{\cX}(x)$, with $g$ an element of
$$\mathcal{O}_{\cX_R,\red_{\cX}(x)}\otimes_R K.$$

Since $\cX$ is $\cC$-flat and normal, we know by \cite[7.4.1]{dJ} that for every affine open subscheme $\cU$ of $\cX$, the $R$-algebra of regular functions
 on the formal $t$-adic completion $\widehat{\cU}$ of $\cU_R$ coincides with the $R$-algebra of analytic functions on
 $\widehat{\cU}_\eta=\red_{\cX}^{-1}(\cU_s)$ whose absolute value is bounded by one. Thus by \cite[2.4.4]{berkbook}, there exists
 an irreducible component $E$ of $\cX_s$ whose
closure contains $\red_{\cX}(x)$ and such that the following property holds: if we denote by $\xi$ the generic point
of $E$ and by $x'$ the unique point in
$\red_{\cX}^{-1}(\xi)$, then $
 |g(x')|\geq |g(x)|$. But $x'$ belongs to $\Sk(\cX)$ (in the language of \cite{MuNi}, it is the divisorial point associated with the pair $(\cX^{\snc}_R,E\cap \cX^{\snc}_R)$), so that $\weight_{\omega}(x')=-\ln|g(x')|$.  It follows that
$$\weight_{\omega}(x')\leq -\ln |g(x)|.$$ Thus it is enough to show that
$$\weight_{\omega}(x)>-\ln|g(x)|$$ since in that case,
 $x$ cannot belong
 to the locus $\Sk(X_K,\omega)$ where $\weight_{\omega}$ reaches
its minimal value.

 Let $h:\cY\to \cX$ be a log-resolution of $(\cX,\cX_s)$. Then
 $\Sk(X_K,\omega)$ is contained in $\Sk(\cY)$.
 We denote by $\Delta$ the log pullback of $(\cX_s)_{\red}$ to
 $\cY$. Locally at $\red_{\cY}(x)$, it is explicitly given by
 $$\frac{1}{m}(\mathrm{div}(h^*g)-\mathrm{div}_{\cY}(\omega)).$$
 Since $\cX$ is a $dlt$-model and $\red_{\cX}(x)$ does not belong to $\cX^{\snc}$, we know that $\Delta<(\cY_s)_{\red}$ locally
 around $\red_{\cY}(x)$ by Lemma \ref{lemm:dlt}. Therefore,
  we can write
 \begin{eqnarray*} \weight_{\omega}(x)&=&v_x(\mathrm{div}_{\cY}(\omega)+m(\cY_s)_{\red})
 \\ &>&v_x(\mathrm{div}_{\cY}(\omega)+m\Delta)
 \\ &=&-\ln|g(x)|.
 \end{eqnarray*}
\end{proof}

\begin{theorem}\label{thm-ess}
If $K_X$ is semi-ample over $C$ and $\cX$ is a good minimal
$dlt$-model of $X$ over $\cC$, then
$$\Sk(X_K)=\Sk(\cX).$$ Moreover, if $m$ is a positive integer such
that $mK_{\cX}+m(\cX_s)_{\red}$ is Cartier and generated by global
sections $\omega_1,\ldots,\omega_r$ over some neighbourhood of $s$
in $\cC$, then
\begin{equation}\label{eq-KS} \Sk(X_K)=\bigcup_{i=1}^r \Sk(X_K,\omega_i).\end{equation}
\end{theorem}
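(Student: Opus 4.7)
The plan is to establish the chain of inclusions
\[
\bigcup_{i=1}^r \Sk(X_K,\omega_i) \subset \Sk(X_K) \subset \Sk(\cX) \subset \bigcup_{i=1}^r \Sk(X_K,\omega_i),
\]
which simultaneously yields both the identification $\Sk(X_K)=\Sk(\cX)$ and the explicit formula \eqref{eq-KS}. The first inclusion is the definition of the essential skeleton, and the second follows by taking the union of Proposition \ref{prop-KS} over all non-zero pluricanonical forms on $X_K$. The only substantive content is therefore the third inclusion $\Sk(\cX)\subset\bigcup_i\Sk(X_K,\omega_i)$.

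To prove it, fix $x\in\Sk(\cX)=\Sk(\cX^{\snc})$, so that $\red_\cX(x)\in\cX^{\snc}$. Over a neighbourhood of $\red_\cX(x)$ the line bundle $\mathcal{L}:=\mathcal{O}_\cX(m(K_\cX+(\cX_s)_{\red}))$ admits a local generator $\theta$, which, viewed as a rational section of $\omega^{\otimes m}_{\cX_R/R}$, has divisor $-m(\cX_s)_{\red}$ near $\red_\cX(x)$. Writing $\omega_i=g_i\theta$ locally, global generation of $\mathcal{L}$ yields an index $i_0$ such that $g_{i_0}$ is a unit at $\red_\cX(x)$, whence $|g_{i_0}(x)|=1$. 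The weight formula for an $snc$-model (applied to $\cX^{\snc}$) then collapses to
\[
\weight_{\omega_{i_0}}(x)=v_x\bigl(\mathrm{div}(g_{i_0})-m(\cX_s)_{\red}+m(\cX_s)_{\red}\bigr)=-\ln|g_{i_0}(x)|=0.
\]

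The remaining step is to show that $0$ is a global lower bound for $\weight_{\omega_{i_0}}$ on $X_K^{\an}$, which forces $x\in\Sk(X_K,\omega_{i_0})$. Choose a log resolution $h:\cY\to\cX$ of $(\cX,\cX_s)$ with $\cY$ a proper $snc$-model, and let $\Delta$ denote the log pullback of $(\cX_s)_{\red}$. Since $\omega_{i_0}$ is a regular section of $\mathcal{L}$, its pullback is a regular section of $\mathcal{O}_\cY(m(K_\cY+\Delta))$, i.e.\ the divisorial inequality $\mathrm{div}_\cY(\omega_{i_0})+m\Delta\geq 0$ holds on $\cY_R$. The $dlt$-hypothesis on $\cX$ (see \eqref{sss:dltdef}) forces $\Delta\leq(\cY_s)_{\red}$, hence
\[
\mathrm{div}_\cY(\omega_{i_0})+m(\cY_s)_{\red}\geq 0.
\]
Thus $\weight_{\omega_{i_0}}(y)\geq 0$ for every $y\in\Sk(\cY)$, and combining with the inequality $\weight_{\omega_{i_0}}(y)\geq\weight_{\omega_{i_0}}(\rho_\cY(y))$ from \cite[4.4.5]{MuNi} (valid on $X_K^{\an}=\widehat{\cY}_\eta$ because $\cY$ is proper) we conclude $\weight_{\omega_{i_0}}\geq 0$ everywhere, completing the chain.

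The delicate point I foresee is the divisor bookkeeping on the log resolution: one must identify $h^*\mathcal{L}$ with $\mathcal{O}_\cY(m(K_\cY+\Delta))$ sharply enough to read off $\mathrm{div}_\cY(\omega_{i_0})+m\Delta\geq 0$ from mere regularity of the section. Once this is in place, the $dlt$-hypothesis --- precisely the inequality $\Delta\leq(\cY_s)_{\red}$, which would fail in the purely log canonical case --- converts the inequality against $\Delta$ into effectiveness against the reduced special fiber, and the argument goes through.
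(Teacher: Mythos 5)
Your proposal is correct and follows essentially the same route as the paper: both reduce to the single inclusion $\Sk(\cX)\subset\bigcup_i\Sk(X_K,\omega_i)$ via Proposition \ref{prop-KS}, and both use global generation of $\mathcal{O}_{\cX}(m(K_{\cX}+(\cX_s)_{\red}))$ to produce, for each point of $\Sk(\cX)$, a form $\omega_{i_0}$ whose weight vanishes there while being globally non-negative. Your only departure is that you spell out the global lower bound $\weight_{\omega_{i_0}}\geq 0$ explicitly through the log resolution and the inequality $\Delta\leq(\cY_s)_{\red}$, a step the paper asserts more briefly; the bookkeeping you flag as delicate is standard and your argument is sound.
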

\begin{proof} By Proposition \ref{prop-KS}, it is enough to show
that $\Sk(\cX)$ is contained in the right hand side of
\eqref{eq-KS}. Shrinking $\cC$ around $s$ if necessary, we can
assume that $mK_{\cX}+m(\cX_s)_{\red}$ is generated by global
sections $\omega_1,\ldots,\omega_r$. Then for each point $x$ on
$\cX^{\snc}_{s}$, we can choose an index $i$ in $\{1,\ldots,r\}$
 such that $\mathrm{div}_{\cX^{\snc}}(\omega_i)+m(\cX_s)_{\red}$ is an
effective divisor on $\cX$ and $x$ is not contained in its
support.
 This implies that the
weight $\weight_{\omega_i}$ of $\omega_i$ is zero at all points of
$\Sk(\cX)\cap \red_{\cX}^{-1}(x)$ and non-negative at all other
points of $X^{\an}_K$. Thus $\Sk(\cX)\cap \red_{\cX}^{-1}(x)$ is
contained in $\Sk(X_K,\omega_i)$. Varying the point $x$, we find
that $\Sk(\cX)$ is contained in $$\bigcup_{i=1}^r
\Sk(X_K,\omega_i).$$
\end{proof}

\begin{cor}\label{cor-defret}
If $X$ is projective over $C$ and $K_X$ is semi-ample over $C$,
then the essential skeleton $\Sk(X_K)$ is a strong deformation
retract of $X^{\an}_K$.
\end{cor}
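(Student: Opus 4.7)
The plan is to assemble the corollary directly from the two main outputs of the preceding section: the comparison of the essential skeleton with the skeleton of a good minimal $dlt$-model (Theorem \ref{thm-ess}) and the deformation retraction onto the skeleton of such a model (Corollary \ref{cor-sdr}). No new geometric input should be required.

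First, I would invoke Theorem \ref{thm-mindlt}\eqref{item:exist}: the semi-ampleness of $K_X$ over $C$ guarantees that $X$ admits a good minimal $dlt$-model $\cX$ over $\cC$. This is what makes the rest of the chain of equalities/retractions meaningful. Next, I would apply Theorem \ref{thm-ess} to identify $\Sk(X_K)$ with $\Sk(\cX)$ as topological subspaces of $X_K^{\an}$. Finally, Corollary \ref{cor-sdr} gives a strong deformation retraction from $X_K^{\an}$ onto $\Sk(\cX)$, which, via the identification just established, is a strong deformation retraction onto $\Sk(X_K)=\Sk(X)$.

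There is essentially no obstacle here, since all the work has been done: the construction of the retraction in Corollary \ref{cor-sdr} proceeds by combining a log resolution $\cY\to\cX$ (collapsing $\Sk(\cY)$ onto $\Sk(\cX)$ via the elementary collapses of Theorem \ref{thm-collapse}) with Thuillier's retraction of $X_K^{\an}$ onto $\Sk(\cY)$ from Theorem \ref{thm:deform}. The only thing to verify is that the identification $\Sk(X)=\Sk(\cX)$ provided by Theorem \ref{thm-ess} is compatible with this retraction, which is automatic because it is an equality of subspaces of $X_K^{\an}$ and not merely a homeomorphism. Thus the proof will consist of a short three-line deduction citing these two earlier results.
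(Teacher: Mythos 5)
Your proposal is correct and matches the paper's own proof exactly: the paper also deduces the corollary by combining Theorem \ref{thm-mindlt}\eqref{item:exist} (existence of a good minimal $dlt$-model), Theorem \ref{thm-ess} (the identification $\Sk(X)=\Sk(\cX)$ as subspaces of $X_K^{\an}$), and Corollary \ref{cor-sdr} (the strong deformation retraction onto $\Sk(\cX)$). Your added remark that the identification is an equality of subspaces, not merely a homeomorphism, is a worthwhile sanity check but introduces nothing beyond the paper's argument.
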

\begin{proof}
 This follows from
 Theorem \ref{thm-mindlt}\eqref{item:exist},
 Corollary \ref{cor-sdr} and Theorem \ref{thm-ess}.
\end{proof}

\section{The essential skeleton of a Calabi-Yau
variety}\label{ss-CY}
\subsection{The skeleton is a pseudo-manifold}
\sss The case where $X$ is a
 family of Calabi-Yau varieties over $C$ is of particular
 interest; the connections with homological mirror symmetry were the main motivation for Kontsevich and
 Soibelman to define the skeleton in \cite{KoSo}. If
 $\omega$ is a volume form on $X_K$ (i.e., a
 nowhere vanishing differential form of maximal degree)
 then $\Sk(X_K)=\Sk(X_K,\omega)$ by \cite[4.6.4]{MuNi}. We will
 now prove that the underlying topological space of the essential skeleton $\Sk(X_K)$ is a pseudo-manifold with boundary; this
  result is implicitly contained in \cite{KK,Kol11}.

\sss A topological space $T$ is called an $n$-dimensional {\em
pseudo-manifold with boundary} if it admits a triangulation
$\mathscr{T}$ satisfying the following conditions:
\begin{enumerate}
\item  \label{it:equi} (dimensional homogeneity)  $T =
|\mathscr{T}|$ is the union of all $n$-simplices.
\item
\label{it:pseudo} (non-branching) Every $(n - 1)$-simplex is a
face of precisely one or two $n$-simplices.
\item \label{it:connect} (strong
connectedness) \label{it:strong} For every pair of $n$-simplices
$\sigma$ and $\sigma'$ in $\mathscr{T}$, there is a sequence of
$n$-simplices
$$\sigma = \sigma_0, \sigma_1, \ldots, \sigma_\ell = \sigma'$$ such
that the intersection $\sigma_i \cap \sigma_{i+1}$ is an $(n -
1)$-simplex for all $i$.
\end{enumerate}
 We say that $T$ is a {\em closed pseudo-manifold} if we can replace condition \eqref{it:pseudo} by the property that
  every $(n - 1)$-simplex is a
face of precisely two $n$-simplices. A typical example of a
2-dimensional closed pseudo-manifold which is not a manifold is
the pinched torus.

\sss For the reader's convenience, we include some basic facts
about adjunction for $dlt$-pairs. We refer to Chapter 4 of
\cite{Kol13} for more background.  Let $(Y,\Delta)$ be a
$dlt$-pair over $k$, and let $D$ be a log canonical center of
$(Y,\Delta)$. Then $D$ is normal, by \cite[4.16]{Kol13}. There is
a well defined $\mathbb{Q}$-divisor $\Delta_D$ on $D$, called the
{\em different}
 of $\Delta$ on $D$ \cite[4.18]{Kol13}, which is induced by the Poincar\'e map and
satisfies the equation
$$(K_Y+\Delta)|_{D}=K_D+\Delta_D.$$
 In the sequel, whenever we write such an equation it will be
 understood that $\Delta_D$ is the different of $\Delta$ on $D$.
 The pair $(D,\Delta_D)$ is
again a $dlt$-pair, by \cite[4.19]{Kol13}. Write $\lfloor
\Delta\rfloor=\sum_{i\in I}D_i$. If $J$ is a subset of $I$ and $D$
is a component of $\bigcap_{j\in J}{\Delta}_j$,
 it is not hard to see that for every non-empty subset $J'$ of $I\setminus J$,
   every irreducible component of the intersection
$$D\cap \bigcap_{j\in J'} \Delta_{j}$$
 is a log canonical center of $(D,\Delta_D)$ (see
\cite[4.19]{Kol13}).  Conversely, by repeatedly using inversion of
adjunction \cite[4.9]{Kol13}, one sees that any log canonical
center of $(D,\Delta_D)$ is a log canonical center of
$(Y,\Delta)$, and thus an irreducible component of an intersection
$D\cap \bigcap_{j\in J'} \Delta_{j} $ for some non-empty subset
$J'$ of $I\setminus J$.

\begin{theorem}\label{thm:CY}
 Assume that $X$ is projective over $C$ and $K_X$ is $\Q$-linearly equivalent to $0$ over $C$. Then the underlying topological space of $\Sk(X_K)$ is
a pseudo-manifold with boundary.
\end{theorem}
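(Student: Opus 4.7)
By Theorem \ref{thm-mindlt}\eqref{item:exist}, the hypothesis $K_X\sim_{\Q}0$ (in particular semi-ample) ensures that $X$ admits a good minimal $dlt$-model $\cX$ over $\cC$, and Theorem \ref{thm-ess} identifies $\Sk(X_K)$ with $\Sk(\cX)$. Since $\Sk(\cX)$ is canonically homeomorphic to the dual complex $\D((\cX_s)_{\red})$, the task reduces to proving that this dual complex is a pseudo-manifold with boundary.

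The key preparatory step is to show that, after shrinking $\cC$ around $s$, the pair $(\cX,(\cX_s)_{\red})$ is dlt log Calabi-Yau, i.e., $K_{\cX}+(\cX_s)_{\red}\sim_{\Q}0$. The divisor $K_{\cX}+(\cX_s)_{\red}$ is semi-ample over $\cC$ and its restriction to the generic fibre $X$ is $\Q$-linearly equivalent to zero, so the associated contraction $\cX\to Z$ factors through a birational morphism $Z\to\cC$; after shrinking $\cC$ we may take $Z=\cC$, so $K_{\cX}+(\cX_s)_{\red}$ pulls back from a $\Q$-divisor on $\cC$. Since $\cX_s$ is principal near $s$ (being pulled back from a uniformizer of $\cC$ at $s$), absorbing a rational multiple yields the claim. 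By adjunction \cite[4.18]{Kol13}, every log canonical center $W$ of $(\cX,(\cX_s)_{\red})$ then inherits a dlt log Calabi-Yau structure $(W,\Delta_W)$ with $K_W+\Delta_W\sim_{\Q}0$.

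Granted this structure, the three axioms of pseudo-manifold with boundary follow from the theory of dual complexes of dlt log Calabi-Yau pairs developed in \cite{KK,Kol11}. The non-branching condition localizes to the link of a codimension-$1$ simplex of $\D((\cX_s)_{\red})$; via iterated adjunction down to dimension one it reduces to the classification of $1$-dimensional dlt log Calabi-Yau pairs, which are either $(\Pro^1,p_1+p_2)$ or an elliptic curve with empty boundary, giving at most two adjacent maximal simplices. Dimensional homogeneity and strong connectedness follow from the connectedness statements for log canonical centers of dlt log Calabi-Yau pairs in \cite{KK,Kol11}, applied inductively through adjunction. When $k$ is not algebraically closed, one passes to the $\Gal(\bar{k}/k)$-equivariant dual complex and descends as in \cite[\S31]{dFKX}.

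The principal difficulty is dimensional homogeneity: one must verify that every log canonical center $W$ whose corresponding cell is not maximal admits a strictly smaller log canonical center of $(\cX,(\cX_s)_{\red})$ contained in it, equivalently that $\lfloor\Delta_W\rfloor\neq 0$ whenever $W$ is not minimal. This is the point where the log Calabi-Yau hypothesis enters essentially, via the global non-vanishing results of \cite{KK,Kol11}; without it one could encounter klt log canonical centers that form ``dead ends'' in the dual complex.
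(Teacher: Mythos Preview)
Your overall strategy matches the paper's: pass to a good minimal $dlt$-model $\cX$, argue that $K_{\cX}+(\cX_s)_{\red}\sim_{\Q}0$ over $\cC$, and then invoke the structure theory of log canonical centers of $dlt$ log Calabi--Yau pairs from \cite{KK,Kol11}. The paper is more specific about the input: all three pseudo-manifold axioms are deduced from Theorem~10 of \cite{Kol11}, which states that any two minimal log canonical centers of $(\cX,(\cX_s)_{\red})$ are $\mathbb{P}^1$-linked. This single result immediately yields both dimensional homogeneity (all minimal centers have the same dimension) and strong connectedness, so your framing of dimensional homogeneity as the ``principal difficulty'' requiring a separate non-vanishing argument is somewhat misplaced---it comes out of the same package.

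There is, however, a genuine gap in your non-branching argument. The log canonical center $E$ corresponding to a codimension-one cell of $\D((\cX_s)_{\red})$ has dimension $n-d+1$, where $d=\dim\Sk(X_K)$; this is \emph{not} a curve in general, so ``iterated adjunction down to dimension one'' does not apply. You may be conflating this situation with the maximal-dimension case of Theorem~\ref{thm:noboundary}, where the relevant centers genuinely are curves and the $(\mathbb{P}^1,p_1+p_2)$ classification suffices. In the general case, adjunction to $E$ produces a $dlt$ log Calabi--Yau pair $(E,\Delta_E)$ whose log canonical centers are pairwise disjoint divisors (by minimality), and one must show there are at most two of them. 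The paper does this by invoking the second half of the proof of Theorem~10 in \cite{Kol11}, which constructs the $\mathbb{P}^1$-link structure on $E$; the bound of two then comes from the restriction to a general $\mathbb{P}^1$-fiber, not from adjunction to a one-dimensional center of the original pair.

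A smaller point you omit: the paper passes to the first barycentric subdivision of $\Sk(\cX)$ before verifying the axioms, so that the intersection of two cells is a single common face rather than a union of faces (as happens, for instance, for a type~$I_2$ degeneration of elliptic curves).
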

\begin{proof}
As we mentioned above, this result is essentially contained in
\cite{KK,Kol11}.  Using the terminology there, properties (1)-(3)
of a pseudo-manifold all follow from the fact that two minimal log
canonical centers of a log crepant structure are
$\mathbb{P}^1$-linked in the sense of Definition 9 in
\cite{Kol11}. We will now explain this in more detail. We denote
by $n$ the relative dimension of $X$ over $C$.

 By Theorem
\ref{thm-mindlt}\eqref{item:exist}, there exists a a good minimal
$dlt$-model $\cX$ of $X$ over $\cC$.  By Theorem \ref{thm-ess}, we
have $\Sk(X_K)=\Sk(\cX)$. As a triangulation on $\Sk(\cX)$, we
take the first barycentric subdivision of the simplicial structure
on $\Sk(\cX)$. This barycentric subdivision is necessary to
guarantee that the intersection of two faces is a codimension one
face of both, rather than a union of faces (think of a type
$I_{2}$ degeneration of elliptic curves, whose skeleton consists
of two vertices joined by two edges).


We choose an integer $m>0$ such that $mK_X\sim 0$. Since the
divisor
 $mK_{\cX}+m(\cX_s)_{\red}$ is semi-ample over $\cC$ and trivial over $C$, we see that
$mK_{\cX}+m(\cX_s)_{\red}$ must be a multiple of $\cX_s$ and thus
trivial over $\cC$. Thus we can apply  Theorem 10 in \cite{Kol11}
to the $dlt$-pair $(\cX,(\cX_s)_{\red})$ over $\cC$.
 It states that
every two minimal log canonical centers $D$ and $D^*$ of
$(\cX,(\cX_s)_{\red})$ are $\mathbb{P}^1$-linked.  This  means, in
particular, that they have the same dimension, say $n-d$, and that
there exist a sequence of $(n-d+1)$-dimensional log canonical
centers $E_1,E_2,\ldots,E_\ell$ and a sequence of
$(n-d)$-dimensional log canonical centers $D=D_0,D_1,\ldots,
D_\ell=D^*$ such that $D_{i-1},D_{i}\subset E_i$ for $1\leq i\leq
\ell$.
  In this way, we obtain properties \eqref{it:equi}
 and \eqref{it:connect} of a pseudo-manifold with boundary.

 If we
have two minimal log canonical centers $D_1,\,D_2$ of
$(\cX,(\cX_s)_{\red})$, contained in an $(n-d+1)$-dimensional
 log canonical center $E$, and if we write
 $$(K_{\cX}+ (\cX_s)_{\red})|_{E}=K_E+D_1+D_2+\Delta$$
 for some $\Delta\ge 0$, then $(E,D_1+D_2+\Delta)$ is again a $dlt$-pair
 \cite[4.19]{Kol13}. Moreover, $D_1$ cannot intersect $D_2$ or
 $\lfloor \Delta \rfloor$ because the intersection would be a
 union of log canonical centers of $(\cX,(\cX_s)_{\red})$, which
 contradicts the minimality of $D_1$. Thus we are in the
 situation of the second part of the proof of Theorem 10 in
 \cite{Kol11}.
 That proof shows that $D_1$ and $D_2$ are the only log canonical centers of $(E,D_1+D_2+\Delta)$.  Property \eqref{it:pseudo} follows.
  \end{proof}

\sss We can say more in the case where $\Sk(\cX)$ has maximal
dimension, that is, dimension equal to $n=\dim (X_K)$. First, we
need a lemma.
\begin{lem}\label{l-adj}
Let $(Z,\Delta=\sum^j_{i=1}\Delta_i)$ be a reduced $dlt$-pair
 over $k$ such that $K_Z+\Delta$ is Cartier. Let $D$ be a log canonical
center and let $\Delta_1,\ldots,\Delta_\ell$ be the irreducible
components of $\Delta$ that contain $D$. Let $U$ be the maximal
open subset of $Z$ where $Z$ is smooth and $\Delta$ is
 a divisor with strict normal crossings. If we write $(K_Z+\Delta)|_D=K_D+\Delta_D$,
then $\Delta_D$ is equal to the closure of the restriction of
$\sum^j_{i=\ell+1}\Delta_i|_U$ to $U\cap D$.
\end{lem}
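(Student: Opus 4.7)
The strategy is to compute the different $\Delta_D$ explicitly on the snc locus $U \cap D$ using Poincar\'e residues, and then extend the identification to all of $D$ by normality.

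First, I would work Zariski-locally at a point $p \in U \cap D$. Since $(Z,\Delta)|_U$ is snc and the coefficients of $\Delta$ are all equal to $1$, I may choose local coordinates $z_1, \ldots, z_n$ at $p$ such that $\Delta_i = \{z_i = 0\}$ for $1 \le i \le \ell$, while $\Delta_{\ell+1}, \ldots, \Delta_j$ are cut out by regular functions $f_{\ell+1}, \ldots, f_j$ forming an snc configuration with $\{z_1, \ldots, z_\ell\}$, and $D$ is locally the component $V(z_1, \ldots, z_\ell)$ through $p$. Since $K_Z + \Delta$ is Cartier, the line bundle $\omega_Z(\Delta)$ admits the local generator
$$\eta = \frac{dz_1 \wedge \cdots \wedge dz_n}{z_1 \cdots z_\ell \cdot f_{\ell+1} \cdots f_j}.$$
Taking the iterated Poincar\'e residue of $\eta$ along $z_1, \ldots, z_\ell$, which is the construction underlying the definition of the different \cite[4.18]{Kol13}, yields
$$\mathrm{Res}(\eta) = \frac{dz_{\ell+1} \wedge \cdots \wedge dz_n}{f_{\ell+1}|_D \cdots f_j|_D},$$
a local generator of $\omega_D\bigl(\sum_{i=\ell+1}^j \Delta_i|_D\bigr)$ on $U \cap D$. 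Reading off the adjunction formula $(K_Z+\Delta)|_D = K_D + \Delta_D$ against this computation gives the local equality
$$\Delta_D|_{U \cap D} = \sum_{i=\ell+1}^{j} \Delta_i|_{U \cap D}.$$

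Second, I would argue that this identification determines $\Delta_D$ globally on $D$. For this I would invoke the standard fact from the structure theory of $dlt$-pairs that, for every log canonical center $D$ of a $dlt$-pair $(Z, \Delta)$, the open subset $U \cap D$ is big in $D$, that is, $D \setminus U$ has codimension at least two in $D$ (cf.\ \cite[4.16, 4.19]{Kol13}). Since $D$ is normal and $\Delta_D$ is a $\Q$-Weil divisor on $D$, it is uniquely determined by its restriction to any big open subset; hence $\Delta_D$ is the closure in $D$ of the divisor $\sum_{i=\ell+1}^{j} \Delta_i|_{U \cap D}$, which is precisely the claim.

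The main obstacle is the second step: the bigness of $U \cap D$ in $D$. This is really where the $dlt$ hypothesis is used, as opposed to just log canonicity; for a merely lc pair, $\Delta_D$ could pick up fractional contributions from the codimension-two locus where $Z$ is singular or $\Delta$ fails to be snc along $D$, and the lemma would fail. The hypothesis that $K_Z + \Delta$ is Cartier further eliminates any fractional contributions coming from Gorenstein indices, making the clean snc-based description available.
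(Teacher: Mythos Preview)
Your first step, the Poincar\'e residue computation on $U\cap D$, is correct and yields $\Delta_D|_{U\cap D}=\sum_{i>\ell}\Delta_i|_{U\cap D}$. The gap is in the second step: the claim that $U\cap D$ is big in $D$ does \emph{not} follow from the $dlt$ hypothesis alone, and it is not contained in \cite[4.16 or 4.19]{Kol13}. Here is a counterexample. Take $Z=\{xy=z^2\}\subset\mathbb{A}^3_k$, the $A_1$ surface singularity, and let $\Delta=\Delta_1$ be the Weil divisor $\{x=z=0\}$. Blowing up the origin gives a log resolution whose exceptional curve $E$ satisfies $a(E;Z,\Delta_1)=-\tfrac12$, so $(Z,\Delta_1)$ is $plt$, hence $dlt$. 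The snc locus is $U=Z\setminus\{0\}$, and $D\setminus U=\{0\}$ has codimension one in the curve $D=\Delta_1$. Correspondingly the different is $\Delta_D=\tfrac12\cdot\{0\}$, whereas the closure of $\sum_{i>1}\Delta_i|_{U\cap D}$ is zero; the conclusion of the lemma fails too, consistently with the fact that $K_Z+\Delta_1$ is not Cartier in this example.

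The bigness of $U\cap D$ in $D$ \emph{does} hold once you also assume $K_Z+\Delta$ is Cartier, but establishing it already requires the main idea of the paper's proof. The paper avoids bigness altogether: it reduces by induction to the case $D=\Delta_1$, then observes that $\Delta_D$ is an \emph{integral} effective divisor (because $K_Z+\Delta$ is Cartier) with all coefficients at most $1$ (because $(D,\Delta_D)$ is $dlt$), hence reduced; each component of $\Delta_D$ is therefore an lc center of $(Z,\Delta)$ and so must be a component of some $D\cap\Delta_i$ with $i>\ell$. This identifies $\Delta_D$ directly, without any control on the complement of the snc locus. In short, the Cartier hypothesis is doing the essential work that you attributed to the $dlt$ condition.
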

\begin{proof}We first notice that  in the above statement,
$U$ can be replaced by any smaller open set that meets all the log
canonical centers: the closure of the restriction of
$\sum^j_{i=\ell+1}\Delta_i|_U$ to $U\cap D$ will yield the same
divisor on $D$.

Then by induction,  we only need to treat the case where $D$ is a
component of $\Delta$, say $\Delta_1$. If we take a log resolution
$f:Y\to (Z,\Delta)$ and let $D'=\Delta'_1$ be the birational
transform of $D$, then $\Delta_D$ can be computed as follows: if
we write $f^*(K_Z+\Delta)|_{D'}=K_{D'}+\Delta_{D'}$ then
$\Delta_D=(f|_{D'})_*(\Delta_{D'})$. In particular, as
$K_Z+\Delta$ is Cartier, we know that $\Delta_D$ is a integral
divisor. Since it is effective, and all the components of
$\Delta_D$ are log canonical centers of $(X,\Delta)$, we see
 that $\Delta_D$ must be equal to the closure of  the restriction of
$\sum^j_{i=2}\Delta_i|_U$.
\end{proof}

\begin{theorem}\label{thm:noboundary}
We suppose that $k$ is algebraically closed. Assume that $X$ is
projective over $C$, $K_X$ is trivial over $C$ and $X$ has a
projective
 $snc$-model $\cY$ over $\cC$ with reduced special fiber $\cY_s$. Assume moreover that $\Sk(X_K)$ is of dimension
$n=\dim(X_K)$. Then $\Sk(X_K)$ is an $n$-dimensional closed
pseudo-manifold.
\end{theorem}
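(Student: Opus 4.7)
The plan is to build on Theorem \ref{thm:CY}, which already gives that $\Sk(X_K)$ is an $n$-dimensional pseudo-manifold with boundary; what remains is to upgrade property (2) from ``precisely one or two'' to ``precisely two'' $n$-simplices containing each $(n-1)$-simplex. To avoid the index issues that would arise on a good minimal dlt-model (whose special fiber need not be reduced), I will carry out the analysis entirely on the semistable model $\cY$. Let $\omega$ be a nowhere-vanishing volume form on $X_K$ (available since $K_X\sim 0$); by \cite[4.6.4]{MuNi}, $\Sk(X_K)=\Sk(X_K,\omega)$, and by \cite[4.5.5]{MuNi} this is a union of faces of $\Sk(\cY)$. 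Writing $\cY_s=\sum_i E_i$ (reduced by hypothesis) and $\mathrm{div}_\cY(\omega)=\sum_i a_i E_i$, the essential skeleton is spanned by those components $E_i$ achieving the minimum $c:=\min_i(a_i+1)$ of the weight function. Under the standard bijection between $k$-simplices of $\Sk(\cY)$ and $(k+1)$-fold intersections in $\cY_s$, an $(n-1)$-simplex of $\Sk(X_K)$ is an $n$-fold intersection $C=E_{i_1}\cap\cdots\cap E_{i_n}$ with $a_{i_k}+1=c$ for each $k$.

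Fix such a $C$. By dimensional homogeneity it contains at least one 0-dim lc center lying in $\Sk(X_K)$, and the $\mathbb{P}^1$-linkedness invoked in the proof of Theorem \ref{thm:CY} (Theorem 10 of \cite{Kol11}) then forces $C\cong\mathbb{P}^1$. Since $\cY$ is snc with reduced fiber, $K_\cY+\cY_s$ is Cartier and adjunction gives
\[
(K_\cY+\cY_s)|_C = K_C + \Delta_C, \qquad \Delta_C = \sum_{j\notin\{i_1,\ldots,i_n\}} (C\cdot E_j),
\]
an integral effective divisor on $C$ by snc transversality. On the other hand, since $\omega$ is a rational section of $\omega_{\cY/\cC}$ and $C$ maps to the single point $s\in\cC$, we have $K_\cY\sim\sum_i a_i E_i$ on a neighborhood of $C$, hence $K_\cY+\cY_s\sim\sum_i(a_i+1)E_i$. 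Using $\cY_s|_C\sim 0$ to rewrite the self-intersection contributions $\sum_{k}E_{i_k}|_C\sim -\sum_{j\notin\{i_k\}}(C\cdot E_j)$, one obtains
\[
(K_\cY+\cY_s)|_C \sim \sum_{j\notin\{i_k\}} \bigl((a_j+1)-c\bigr)(C\cdot E_j).
\]

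Equating the two expressions and taking degrees on $C\cong\mathbb{P}^1$ yields $\sum_{j\notin\{i_k\}}(a_j-c)(C\cdot E_j)=-2$. Since $a_j+1\geq c$ with equality iff $E_j$ contributes to $\Sk(X_K)$, and since each $(C\cdot E_j)$ is a nonnegative integer, this forces
\[
\sum_{j\,:\,a_j+1=c,\,j\notin\{i_k\}}(C\cdot E_j) \geq 2.
\]
By snc transversality, each such intersection point is a distinct 0-dim lc center of $(\cY,\cY_s)$ lying in $\Sk(X_K)$, hence an $n$-simplex of $\Sk(X_K)$ containing $C$; combined with the upper bound of two from Theorem \ref{thm:CY}, exactly two $n$-simplices contain $C$, establishing the closed pseudo-manifold property. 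The main obstacle I anticipate is the degree bookkeeping --- specifically, cleanly invoking $\cY_s|_C\sim 0$ to absorb the self-intersection contributions of the $E_{i_k}$ --- together with the clean identification of $(n-1)$-simplices of $\Sk(X_K)=\Sk(\cX)\subset\Sk(\cY)$ with the min-weight $n$-fold strata on $\cY_s$.
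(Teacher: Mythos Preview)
Your approach diverges from the paper's in a significant way. You work directly on the snc model $\cY$, whereas the paper runs MMP from $\cY$ to obtain a good minimal dlt model $\cX$ \emph{with reduced special fiber}---this is precisely where the hypothesis that $\cY_s$ is reduced enters, via \cite{Fujino} or \cite{HX}. On $\cX$ one then has $K_{\cX}+\cX_s\sim_{\Q}0$, so for any one-dimensional log canonical center $D$ of $(\cX,\cX_s)$ adjunction together with Lemma~\ref{l-adj} gives $K_D+\Delta_D\sim 0$ with $\Delta_D$ reduced and non-empty, forcing $D\cong\mathbb{P}^1$ and $\ell=2$ in one stroke.

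Your argument contains a genuine gap at the claim $C\cong\mathbb{P}^1$. The $\mathbb{P}^1$-linkedness of Theorem~10 in \cite{Kol11} requires $K+\Delta\sim_{\Q}0$ over the base, which holds for $(\cX,\cX_s)$ but fails for $(\cY,\cY_s)$: there one only has $K_{\cY}+\cY_s\sim\sum_i(a_i+1-c)E_i$, an effective divisor supported on the non-essential components. Your degree identity therefore reads
\[
2g(C)-2 \;=\; -m \;+\; \sum_{j\ \mathrm{non\text{-}ess}}(a_j-c)(C\cdot E_j),
\]
with the last sum $\geq 0$; this yields only $m\geq 2-2g(C)$, which is vacuous once $g(C)\geq 1$. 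Dimensional homogeneity supplies $m\geq 1$, but nothing you have written rules out, for instance, $g(C)=1$ with $m=1$, and the upper bound $m\leq 2$ borrowed from Theorem~\ref{thm:CY} (which is proved in the $\cX$-triangulation) does not help with the lower bound. Your stated reason for avoiding $\cX$---that its special fiber ``need not be reduced''---is exactly what the reducedness hypothesis on $\cY_s$ is there to prevent; the paper's route through $\cX$ is what makes the adjunction computation close.
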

\begin{proof} By running MMP for $\cY$ over $\cC$, we know that $X$ has a good
minimal dlt model $\cX$ with reduced special fiber (see
\cite{Fujino} or \cite{HX}).
 Then one sees as in the proof of Theorem \ref{thm:CY} that $K_{\cX}$ is trivial over $\cC$. Our
assumption on the dimension of $\Sk(X_K)$ implies that the minimal
log canonical centers of $(\cX, \cX_s)$ are points. Let $D$ be a
one-dimensional log canonical center, and let $D_i$ ($1\le i \le
\ell$) be the 0-dimensional log canonical centers contained in
$D$. From Lemma \ref{l-adj}, we know that
$$(K_{\cX}+\cX_s)|_{D}=K_{D}+\sum^\ell_{i=1}
D_i\sim 0.$$ Thus $D$ is a rational curve and $\ell=2$, which
means that $\Sk(X_K)$ is closed.
\end{proof}

\sss We do not know any example where $X$ satisfies the conditions of
 Theorem \ref{thm:noboundary} and $\Sk(X)$ is not a topological manifold (or even a sphere).
 Even if we omit the condition that the skeleton has maximal dimension, we do not know an example where $X$ is not
 a topological manifold with boundary.
 In Theorem \ref{thm:noboundary}, the condition that $\Sk(X)$
has maximal dimension cannot be omitted; for instance, there are
examples of semi-stable degenerations of K3-surfaces with trivial
relative canonical sheaf where the special fiber is a chain of
surfaces, so that the skeleton is homeomorphic to a closed
interval. We will now give an interpretation of this condition in
terms of the monodromy around $s\in \cC$.

\begin{lemma}\label{lemm:bc}
Let $Y$ be a connected smooth and proper $K$-variety and let
$\omega$ be a non-zero $m$-pluricanonical form on $Y$, for some
$m>0$. Let $K'$ be a finite extension of $K$, set $Y'=Y\times_K
K'$ and denote by $\omega'$ the pullback of $\omega$ to $Y'$. Then
 the skeleton $\Sk(Y',\omega')$ is the inverse image of
 $\Sk(Y,\omega)$ under the projection morphism $\pi:(Y')^{\an}\to
 Y^{\an}$. In particular, $\Sk(Y,\omega)$ and $\Sk(Y',\omega')$ have the same dimension.
\end{lemma}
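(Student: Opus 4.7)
The plan is to reduce the lemma to the base-change invariance of the weight function, namely
\begin{equation*}
\weight_{\omega'}(y') = \weight_\omega(\pi(y')) \quad \text{for every } y' \in (Y')^{\an}.
\end{equation*}
Granted this invariance, the conclusion is purely formal. Both $Y^{\an}$ and $(Y')^{\an}$ are compact, since $Y$ and $Y'$ are proper over their respective base fields, and $\pi$ is surjective (as the analytification of the finite surjective morphism $Y'\to Y$). The invariance then forces the infima of $\weight_\omega$ and $\weight_{\omega'}$ to coincide, giving $\Sk(Y',\omega') = \pi^{-1}(\Sk(Y,\omega))$; applying $\pi$ and using its surjectivity yields $\pi(\Sk(Y',\omega')) = \Sk(Y,\omega)$.

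To establish the invariance, I would choose an $snc$-model $\cX$ of $Y$ over $R$ together with an $snc$-model $\cX''$ of $Y'$ over the ring of integers $R'$ of $K'$, obtained as a log resolution $h\colon \cX''\to \cX\times_R R'$ of the base change. By continuity of the weight functions and density of divisorial valuations, it is enough to verify the identity for $y'\in\Sk(\cX'')$. For such a point, the log-differential reformulation of \eqref{sss:log} gives
\begin{equation*}
\weight_{\omega'}(y') = v_{y'}\bigl(\mathrm{div}_{(\cX'')^+}(\omega')\bigr) + m,
\end{equation*}
and analogously on the $\cX^+$-side. The crucial identity
\begin{equation*}
\mathrm{div}_{(\cX'')^+}(\omega') = h^*\mathrm{div}_{\cX^+}(\omega)
\end{equation*}
reflects the log \'etaleness of the base extension $(S')^+\to S^+$ in characteristic zero (the Kummer-type extension $R\to R'$ becomes log \'etale with divisorial log structures from the closed points) together with the compatibility of relative log differentials under log smooth base change. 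Combined with the elementary fact that the seminorm at $y'$ restricts to the seminorm at $\pi(y')$ on $K(Y)\subset K(Y')$, so that $v_{y'}(h^*D)=v_{\pi(y')}(D)$ for any $\Q$-divisor $D$ on $\cX$, this yields the required invariance.

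The main obstacle is managing the log-geometric bookkeeping through both the base change $\cX\times_R R'$ and the log resolution $h$: one must check that contributions from the exceptional divisor of $h$ to $\mathrm{div}_{(\cX'')^+}(\omega')$ either vanish at skeletal points of $\cX''$ or cancel on both sides of the computation. I expect this to follow from standard arguments of logarithmic geometry in characteristic zero, exploiting the fact that the weights of points on $\Sk(\cX'')$ depend only on the data at generic points of the special fiber, where $h$ is an isomorphism away from the non-snc locus of $\cX\times_R R'$.
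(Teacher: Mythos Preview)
Your approach is essentially the paper's: both reduce to a relation between $\weight_{\omega'}$ and $\weight_\omega\circ\pi$, established via the compatibility of log differentials with base change along the log \'etale morphism $(S')^+\to S^+$. Two differences are worth flagging.

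First, your identity $\weight_{\omega'}(y')=\weight_\omega(\pi(y'))$ is only correct if the absolute value on $K'$ is taken to extend $|\cdot|_K$. The weight function in \cite{MuNi} is set up with the uniformizer of the base having absolute value $1/e$, and with that normalization the paper obtains instead the affine relation $\weight_{\omega'}(y')=d\cdot\weight_\omega(\pi(y'))-d+1$, where $d$ is the ramification index of $K'/K$. Either formula gives the conclusion (any strictly increasing affine relation suffices), but you should say explicitly which convention you use and why it is consistent with the existence of $\pi$.

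Second, and more substantively, the reduction ``by continuity of the weight functions'' does not work: $\weight_\omega$ is not continuous on $Y^{\an}$---it strictly increases off any skeleton, as recalled just before \eqref{sss:log}. The paper avoids this by proving the relation only at divisorial points and then invoking \cite[4.5.1]{MuNi}, which characterizes $\Sk(Y,\omega)$ as the closure of the divisorial points where $\weight_\omega$ is minimal. This also dissolves the bookkeeping obstacle you identify at the end. Rather than fixing global models $\cX,\cX''$ (which would force you to check that $\pi(\Sk(\cX''))\subset\Sk(\cX)$ so that \eqref{sss:log} applies on both sides), the paper treats one divisorial point $y'$ at a time: it picks a regular $R'$-model $\cY'$ with irreducible special fiber realizing $y'$, and then a regular $R$-model $\cY$ such that $\cY'$ is open in the normalization of $\cY\times_R R'$. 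Then $\pi(y')$ is automatically the divisorial point of $\cY$, $(\cY')^+$ sits inside the $fs$ base change $\cY^+\times_{S^+}(S')^+$, and the comparison of log canonical bundles is immediate with no exceptional contributions to track.
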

\begin{proof}
 Since $\pi$ is surjective, we can reduce to the case where $K'$ is Galois over $K$ by replacing $K'$ by a Galois closure over $K$. Let $d$ be the ramification index of $K'$ over $K$. We will prove that
 $$\weight_{\omega'}(y)=d\cdot \weight_{\omega}(\pi(y))+(1-d)m$$ for every
 divisorial point $y$ on $(Y')^{\an}$ (see \cite[2.4.10]{MuNi} for the notion of divisorial point). This immediately implies the
 first statement in the lemma, since
 $\Sk(Y,\omega)$ is the closure of the set of divisorial points where the weight function reaches its minimal value \cite[4.5.1]{MuNi}. The equality of dimensions then follows from the fact that $\pi$ has finite fibers.

 We denote by $R'$ the integral closure of $R$ in $K'$. Let $\cY'$ be a regular separated $R'$-scheme of finite type with
 irreducible special fiber $\cY'_k$, endowed with an isomorphism
 of $K'$-schemes
 $\cY'_{K'}\to Y'$. Let $y$ be the unique point in
 $\red_{\cY'}^{-1}(\xi)$, where $\xi$ denotes the generic point of
 $\cY'_k$. Removing a closed subset of $\cY'_k$ if necessary, we
 can find a regular separated $R$-scheme of finite type $\cY$ and
 an isomorphism $\cY_K\to Y$ such that $\cY'$ is an open subscheme
 of the normalization $\cZ$ of $\cY\times_R R'$. Then $\red_{\cY}(\pi(y))$ is a generic point of
 $\cY_k$.

  If we use the notations from \eqref{sss:log} and denote by $(S')^+$ the log scheme associated to $R'\setminus \{0\}\to R'$, then
  the $(S')^+$-log scheme
  $(\cY')^+$ is isomorphic to an open log subscheme of the $fs$ (fine and saturated) base change $\cZ^+$ of
  $\cY^+$ from $S^+$ to $(S')^+$, because the underlying scheme of this $fs$ base change is precisely $\cZ$.
  Since log differentials are
  compatible with $fs$ base change, the divisor $\mathrm{div}_{\cZ^+}(\omega')$ from \eqref{sss:log} is the pullback of
  $\mathrm{div}_{\cY^+}(\omega)$ to $\cZ$.
   Thus  the
  description of the weight function in \eqref{sss:log} yields
 \begin{eqnarray*}
 \weight_{\omega'}(y)-m&=&v_y(\mathrm{div}_{\cZ^+}(\omega'))
 \\&=& d\cdot v_{\pi(y)}(\mathrm{div}_{\cY^+}(\omega))
 \\ &=&d\cdot \weight_{\omega}(\pi(y))-dm
 \end{eqnarray*}
  (the scaling factor $d$ is caused by the renormalization of the
 discrete valuation on $K'$).
\end{proof}

\begin{theorem}\label{thm:sphere}
Assume that $k=\C$ and denote by $n$ the relative dimension of $X$
over $C$. Suppose that $X$ is projective over $C$ and that
 $K_X$ is trivial over $C$. Let $F$ be a general fiber of the morphism $X\to C$. Then
 $\Sk(X_K)$ has dimension $n$ if and only if the monodromy transformation around $s\in \cC$ on $H^n(F(\C),\Q)$ has a Jordan
 block of size $n+1$. If this holds, $X$ has a projective $snc$-model $\cY$ over $\cC$ such that
 $\cY_s$ is reduced, and $h^{i,0}(F)=0$ for $0<i<n$,
   then $\Sk(X_K)$ has the $\Q$-homology
 of an $n$-sphere.
\end{theorem}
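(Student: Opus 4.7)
My strategy is to reduce to the case of a semistable special fiber and then translate both assertions into statements about the Schmid--Steenbrink limit mixed Hodge structure on $H^n(F,\Q)$. By Theorem~\ref{thm-mindlt}\eqref{item:exist} and Theorem~\ref{thm-ess}, $\Sk(X_K) = \Sk(\cX)$ is homeomorphic to the dual complex $\Delta := \D((\cX_s)_{\red})$ of any good minimal $dlt$-model $\cX$ over $\cC$. After a finite totally ramified base change $\cC' \to \cC$, semistable reduction combined with the MMP produces a good minimal $dlt$-model of $X_{K'}$ over $\cC'$ whose special fiber is reduced $snc$. By Lemma~\ref{lemm:bc} the dimension of $\Sk$ is invariant under this base change, and the Jordan block sizes of the unipotent part of the monodromy are unchanged upon replacing $T$ by $T^d$. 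I therefore assume $\cX_s$ is reduced $snc$ with dual complex $\Delta$, and write $H^n_{\mathrm{lim}} := H^n(F,\Q)$ with its weight filtration $W_\bullet$ and nilpotent logarithm $N = \log T$ of the unipotent monodromy.

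A Jordan block of size $n+1$ on $H^n_{\mathrm{lim}}$ exists iff $N^n \ne 0$. By Schmid's $\mathfrak{sl}_2$-theory, $N^n$ induces an isomorphism
\[
\Gr^W_{2n} H^n_{\mathrm{lim}} \xrightarrow{\sim} \Gr^W_0 H^n_{\mathrm{lim}}(-n),
\]
so $N^n \ne 0$ is equivalent to $\Gr^W_{2n} H^n_{\mathrm{lim}} \ne 0$. The Steenbrink/Rapoport--Zink weight spectral sequence for a semistable degeneration identifies this top graded piece as $\Gr^W_{2n} H^n_{\mathrm{lim}} \cong H^n(\Delta,\Q)(-n)$. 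Since $\dim \Delta \le n$ always, non-vanishing of $H^n(\Delta,\Q)$ forces $\dim \Delta = n$. Conversely, when $\dim \Delta = n$ there is a $0$-dimensional log canonical center near which $\cX \to \cC$ is \'etale-locally given by $t = z_0 z_1 \cdots z_n$; a local toroidal computation (the Milnor fiber of $z_0 \cdots z_n = \epsilon$ is a real $n$-torus with maximally unipotent monodromy) shows that $N^n \omega \ne 0$ for the global volume form $\omega$ on $X$, which exists since $K_X \sim 0$. Hence the monodromy has a Jordan block of size $n+1$.

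Assume in addition that $h^{i,0}(F) = 0$ for $0 < i < n$. By the previous step, Theorem~\ref{thm:noboundary}, and $h^{n,0}(F) = 1$, the complex $\Delta$ is a connected closed $n$-pseudo-manifold with $H^0(\Delta,\Q) = \Q$ and $H^n(\Delta,\Q) = \Q$. It remains to prove $H^i(\Delta,\Q) = 0$ for $0 < i < n$. The Steenbrink identification extends to $\Gr^W_{2i} H^i_{\mathrm{lim}} \cong H^i(\Delta,\Q)(-i)$. This graded piece is pure of weight $2i$ and, via the $\mathfrak{sl}_2$-isomorphism $N^i\colon \Gr^W_{2i} H^i_{\mathrm{lim}} \xrightarrow{\sim} \Gr^W_0 H^i_{\mathrm{lim}}(-i)$ with the pure type-$(0,0)$ piece $\Gr^W_0 H^i_{\mathrm{lim}}$, is of Hodge type $(i,i)$ only. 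Hence
\[
\dim_{\Q} H^i(\Delta,\Q) = \dim_{\C} I^{i,i} \le \dim_{\C} F^i H^i_{\mathrm{lim}} = h^{i,0}(F) = 0,
\]
using the inclusion $I^{i,i} \subset F^i$ in Deligne's bigrading of the limit MHS together with the constancy of the dimensions of the Hodge filtration in smooth projective families.

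The central technical inputs are the Steenbrink/Rapoport--Zink identification $\Gr^W_{2i} H^i_{\mathrm{lim}} \cong H^i(\Delta,\Q)(-i)$ coming from the weight spectral sequence of a semistable degeneration, and the local toroidal computation producing $N^n \omega \ne 0$ at a $0$-dimensional log canonical center. The latter step, which couples the combinatorics of the special fiber with the Hodge-theoretic behaviour of the volume form, is the most delicate ingredient; once it is in place, the remaining arguments reduce to dimension counts inside the limit MHS.
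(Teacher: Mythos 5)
Your overall architecture matches the paper's: reduce to a semistable $snc$-model via Lemma \ref{lemm:bc} and semistable reduction, translate the Jordan-block condition into (non)vanishing of a weight-graded piece of the limit mixed Hodge structure, identify that piece with the cohomology of the dual complex, and use $h^{i,0}(F)=0$ plus Hodge-type constraints for the homology-sphere statement. (The paper phrases this through $W_0\mathbf{H}^i_{\Q}\cong H^i(X_K^{\an},\Q)$ via Berkovich, rather than through $\Gr^W_{2i}$ and $N^i$, but these are equivalent.) Your treatment of the last assertion, bounding $\dim I^{i,i}$ by $\dim F^iH^i_{\mathrm{lim}}=h^{i,0}(F)$, is a correct variant of the paper's argument with $\Gr_F^0$.

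The genuine gap is in the converse implication $\dim\Sk(X_K)=n\Rightarrow N^n\neq 0$. You assert that the existence of a $0$-dimensional log canonical center, where the family is locally $t=z_0\cdots z_n$, together with "a local toroidal computation," yields $N^n\omega\neq 0$. This local-to-global step is exactly the content of the theorem and it is false without further input: a deep stratum with locally maximal unipotent monodromy does not force $N^n\neq 0$ globally. Concretely, $N^n[\omega]\neq 0$ iff the image of $[\omega]$ in $\Gr^W_{2n}H^n_{\mathrm{lim}}\cong H^n(\Delta,\Q)(-n)$ is nonzero, and $H^n(\Delta,\Q)$ is the cokernel of the differential $d_1\colon H^0(Y^{(n)})\to H^0(Y^{(n+1)})$ on the $E_1$-page; a degeneration whose dual complex is, say, a contractible $n$-dimensional complex has deep strata and vanishing torus cycles locally, yet $\Gr^W_{2n}H^n=0$. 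Two inputs are needed and missing from your write-up: (i) the deep stratum must lie in the \emph{essential} skeleton, so that the volume form $\omega$ generates $\omega_{\cY/\cC}(\log\cY_s)$ along all $n+1$ adjacent components and its Poincar\'e residue in $H^0(D,\mathcal{O}_D)$ is nonzero (here \cite[4.5.5]{MuNi} is used); and (ii) the nonzero residue class must survive to the $E_2$-page, i.e.\ not lie in the image of $d_1$ — this is where the degeneration of the Hodge and weight spectral sequences enters in the paper's proof and cannot be omitted. A secondary slip: the MMP produces a minimal \emph{dlt}-model, not an $snc$ one, and the weight spectral sequence identification requires the semistable $snc$-model $\cY$, whose dual complex is only homotopy equivalent to $\Sk(X_K)$ (Theorem \ref{thm-collapse}) and may have strictly larger dimension; so in the forward direction you must argue via $H^n(\Sk(\cY),\Q)\cong H^n(\Sk(X_K),\Q)$ rather than via an equality of dual complexes.
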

\begin{proof}
  By Lemma \ref{lemm:bc} and the Semi-Stable Reduction Theorem we
 can assume that $X$ has a projective $snc$-model $\cY$ over $\cC$ such that
 $\cY_s$ is reduced, by replacing $K$ by a suitable finite extension. For every integer $i\geq 0$, we denote by $$\mathbf{H}^i=\mathbb{H}^i(\cY_s,R\psi_{\cY}(\Z))\cong H^i(F(\C),\Z)$$
  the degree $i$ nearby cohomology of $\cY$ at
 $s$; here $R\psi_{\cY}(\Z)$ denotes the complex of nearby cycles
 with $\Z$-coefficients associated to $\cY$. By \cite{steenbrink},
 the spaces $\mathbf{H}^i$ carry a canonical mixed Hodge structure,
 whose weight filtration coincides with the monodromy filtration.
  In particular, there exists a Jordan
 block of monodromy of size $n+1$ on $\mathbf{H}^n_{\Q}$ if and only if
 $W_0\mathbf{H}^n_{\Q}\neq 0$.

  By \cite[5.1]{berk-limit} and its proof, the $\Q$-vector space $W_0\mathbf{H}^i_{\Q}$ is canonically
   isomorphic to the degree $i$ singular
  cohomology of $X_K^{\an}$, for every $i\geq 0$. Since $X_K^{\an}$ is homotopy
  equivalent to $\Sk(X_K)$ by Corollary \ref{cor-defret}, we see
  that $W_0\mathbf{H}^n_{\Q}$ can only be different from zero if the
  dimension of $\Sk(X_K)$ is equal to $n$.
   We will now prove the
  converse implication. Suppose that $\Sk(X_K)$ has dimension $n$
  and let $\omega$ be a relative volume form on $X$ over $C$ such
  that $\omega$ extends to a global section of $\omega_{\cY/\cC}(\log \cY_s)$
   that generates $\omega_{\cY/\cC}(\log \cY_s)$ at at least one generic point
   of $\cY_s$ (modulo shrinking $\cC$, such $\omega$ always
   exists). Then it follows from \cite[4.5.5]{MuNi} that $\Sk(X_K)$ is the simplicial subspace of
   $\Sk(\cY)$ spanned by the vertices corresponding to the
   irreducible components $E$ of $\cY_s$ such that $\omega$
   generates $\omega_{\cY/\cC}(\log \cY_s)$ at the generic point of $E$. Since
   $\Sk(X_K)$ has dimension $n$, we can find such components
   $E_1,\ldots,E_n$ that intersect in a point. Denote by $D$ the union of $n$-fold intersection points of components of $\cY_s$.
     Then by reduction modulo $t$, $\omega$ induces an element of
 $$H^0(\cY_s,\omega_{\cY/\cC}(\log \cY_s)\otimes
   \mathcal{O}_{\cY_s})$$
   whose image under the
   Poincar\'e residue map
$$\mathcal{R}:H^0(\cY_s,\omega_{\cY/\cC}(\log \cY_s)\otimes
   \mathcal{O}_{\cY_s})\to H^0(\cY_s,\mathrm{Gr}_{-n}^W(\omega_{\cY/\cC}(\log \cY_s)\otimes
   \mathcal{O}_{\cY_s}))\cong H^0(D,\mathcal{O}_D)$$
   is different
   from zero.
 However, by the degeneration of the Hodge and weight spectral sequences,
  the image of $\mathcal{R}$ injects into
  $W_0\mathbf{H}^n_{\C}$.
 Thus $W_0\mathbf{H}^n_{\Q}$ is
   non-trivial.

 Finally, assume that $\Sk(X_K)$ has dimension $n$ and that $h^{i,0}(F)=0$ for $0<i<n$.
 Then $$\mathrm{Gr}_F^0\mathbf{H}^i_{\C}\cong H^i(\cY_s,\mathcal{O}_{\cY_s})=0$$ for $0<i<n$ and
 $$\mathrm{Gr}_F^0\mathbf{H}^i_{\C}\cong H^i(\cY_s,\mathcal{O}_{\cY_s})\cong \C$$ for $i=0,n$ by the degeneration
 of the Hodge spectral sequence for the limit mixed Hodge structure.
  Thus
  $W_0\mathbf{H}^i_{\Q}=0$ for $0<i<n$, $W_0\mathbf{H}^0_{\Q}\cong \Q$ and $W_0\mathbf{H}^n_{\Q}$
  has dimension at most one; it must have dimension one since we
  have already proven that it is non-zero.
 It follows that $\Sk(X_K)$ has the $\Q$-homology of an $n$-sphere.
\end{proof}

\subsection{Removing the algebraicity condition}
\sss In this section, we will extend Theorems \ref{thm-collapse},
 \ref{thm:CY}, \ref{thm:noboundary} and \ref{thm:sphere} to the case where $X$ is a Calabi-Yau variety over $K=k\llpar
t\rrpar $ instead of over the curve $C$.  The crucial point is
that the skeleton of a Calabi-Yau variety can be computed from the
logarithmic structure on the special fiber of any $snc$-model.

\begin{prop}\label{prop:indep}
Let $\cY$ be a connected regular flat proper $R$-scheme such that
$\cY_k$ is a strict normal crossings divisor.
  Then for every
connected
 flat proper $R$-scheme $\cZ$ and every isomorphism of $R/(t^2)$-schemes $$f:\cY\times_R R/(t^2)\to \cZ\times_R R/(t^2),$$
the following properties hold.
\begin{enumerate}
\item \label{it:reg} The scheme $\cZ$ is regular and $\cZ_k$ is a
divisor with strict normal crossings.
 \item \label{it:log} Denote by $S^+$ the log scheme associated to $R\setminus \{0\}\to R$ and by $\cY^+$ and $\cZ^+$ the
  schemes $\cY$ and $\cZ$ endowed with the divisorial log structures associated to their special fibers. For every integer $d>0$ we denote
  by $s^+_d$ the standard log point $(\Spec k,k^*\oplus \N)$ viewed as a log scheme over $S^+$ {\em via} the morphism of charts $\N\to \N:n\mapsto dn$.
   If we denote by $e$ the least common multiple of the multiplicities of the components of $\cY_k$, then
  there exists an isomorphism of log schemes $$g:\cY^+\times_{S^+}s^+_e\to \cZ^+\times_{S^+}s^+_e$$
 over $s^+_e$, such that
 $g$ is compatible with the reduction of $f$ modulo $t$ (meaning that the obvious square in the category of $k$-schemes commutes).
\end{enumerate}
\end{prop}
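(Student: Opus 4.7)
Both parts are local on $\cZ_k$ and proceed by transferring a regular system of parameters from $\cY$ to $\cZ$ via the isomorphism $f$ modulo $t^2$. Fix $y\in\cY_k$ with image $x:=f(y)\in\cZ_k$, and use regularity and the SNC property of $\cY$ to pick regular parameters $(z_1,\ldots,z_n)$ of $\mathcal{O}_{\cY,y}$, a unit $u$, and non-negative integers $N_1,\ldots,N_r$ (with $r\le n$) such that $t=u\prod_{i=1}^r z_i^{N_i}$. Via $f$ modulo $t^2$, pick corresponding elements in $\mathcal{O}_{\cZ,x}/(t^2)$ and lift them arbitrarily to $\tilde z_i\in\mathcal{O}_{\cZ,x}$ and to a unit $\tilde u\in\mathcal{O}_{\cZ,x}$.

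For (1), the identity $t\equiv\tilde u\prod\tilde z_i^{N_i}\pmod{t^2}$ reads $t-\tilde u\prod\tilde z_i^{N_i}=t^2 c$ for some $c\in\mathcal{O}_{\cZ,x}$, so $t(1-tc)=\tilde u\prod\tilde z_i^{N_i}$; absorbing the unit $(1-tc)^{-1}$ into $\tilde u$ gives $t=u'\prod\tilde z_i^{N_i}$ \emph{exactly} in $\mathcal{O}_{\cZ,x}$. Hence $t$, and therefore $t^2$, lies in $(\tilde z_1,\ldots,\tilde z_n)$, so the maximal ideal $\mathfrak{m}_x\subset\mathcal{O}_{\cZ,x}$ (generated by the $\tilde z_i$ modulo $t^2$) is actually generated by $\tilde z_1,\ldots,\tilde z_n$. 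Combined with the flatness computation $\dim\mathcal{O}_{\cZ,x}=1+\dim\mathcal{O}_{\cY_k,y}=n$, this yields regularity of $\cZ$ at $x$, and the explicit monomial expression for $t$ in regular parameters exhibits the SNC structure of $\cZ_k$ at $x$. Regularity on all of $\cZ$ then follows from openness of the regular locus (since $R$ is excellent) and properness: a non-empty closed singular locus, being disjoint from $\cZ_k$, would map to a non-empty closed subset of $\Spec R$ disjoint from the closed point, which is impossible.

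For (2), the fs base change $\cY^+\times_{S^+}s_e^+$ at $y$ admits a chart given by the saturation of the fs pushout of $\N^r\xleftarrow{(N_i)}\N\xrightarrow{\times e}\N$; it sends the generators of $\N^r$ to the images of $z_i$ in $\mathcal{O}_{\cY_k,y}$, and the $s_e^+$-generator to $0$. The parallel chart on $\cZ^+\times_{S^+}s_e^+$ at $x$ maps via the images of $\tilde z_i$ in $\mathcal{O}_{\cZ_k,x}$. Since the reduction of $f$ modulo $t$ identifies these residue classes, the two charts become compatible, and we obtain a local isomorphism of log schemes over $s_e^+$ compatible with $f$ modulo $t$. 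Gluing yields the global isomorphism $g$.

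The main obstacle is the globalization in (2): the local isomorphism depends on the chosen regular parameters and their lifts, so one must verify that different choices give canonically identified local log schemes so that the pieces glue on overlaps. Conceptually this is the statement that the log structure on $\cY^+\times_{S^+}s_e^+$ depends functorially on the pair consisting of the $R/(t^2)$-scheme $\cY\times_R R/(t^2)$ and the log structure on it inherited from $\cY^+$; since $f$ modulo $t^2$ preserves this pair, the local isomorphisms piece together canonically.
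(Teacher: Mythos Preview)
Your argument for (1) is correct and somewhat more explicit than the paper's. The paper simply observes that the Zariski tangent space of $\cZ$ at a point of $\cZ_k$ can be read off from $\cZ\times_R R/(t^2)$ (since $t^2\in\mathfrak m_x^2$), so regularity along $\cZ_k$ is immediate, and the SNC property follows because $\cY_k\cong\cZ_k$. Your route via lifted regular parameters and the explicit monomial expression $t=u'\prod\tilde z_i^{N_i}$ establishes both properties in one stroke; the two approaches are close in spirit.

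For (2), however, there is a genuine gap. Your local chart description is reasonable, but the final paragraph on globalization is not a proof: it asserts that $\cY^+\times_{S^+}s_e^+$ depends functorially on the $R/(t^2)$-log scheme $\cY^+\times_R R/(t^2)$, which is essentially a restatement --- indeed a strengthening --- of what must be shown. Concretely, two points are left unaddressed. First, you have not verified that the scheme isomorphism $f$ modulo $t^2$ upgrades canonically to an isomorphism of \emph{log} schemes over the appropriate base; the divisorial log structures are defined with reference to the ambient schemes $\cY$ and $\cZ$, and compatibility with the structure morphisms to $S^+$ involves the units $u$ and $\tilde u$ in a way that requires justification. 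Second, you have not explained what role the specific choice $e=\mathrm{lcm}(N_i)$ plays in making the local identifications glue; your chart computation names the saturated pushout but never uses any special property of $e$ to show that the local isomorphisms are independent of the choices of $z_i$ and their lifts $\tilde z_i$. The paper does not attempt a direct construction here; it invokes \cite[2.6(2)]{kisin}, which is precisely the statement that an isomorphism modulo $t^2$ determines an isomorphism of the log special fibers after base change to $s_e^+$.
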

\begin{proof}
 It is easy to see that \eqref{it:reg} holds,
 since we can detect regularity by looking at the dimensions of the Zariski tangent spaces
 at the points of $$\cY\times_R R/(t^2)\cong \cZ\times_R
 R/(t^2).$$ Moreover, the special fibers of $\cY$ and $\cZ$ are
 isomorphic so that $\cZ_k$ is a divisor with strict normal
 crossings.
 Point \eqref{it:log} is more subtle and follows from
 \cite[2.6(2)]{kisin}.
\end{proof}

\begin{prop}\label{prop:sklog}
Let $\cY$ be a connected regular flat proper $R$-scheme such that
$\cY_K$ has trivial canonical sheaf and $\cY_k$ is a strict normal
crossings divisor. Then the skeleta $\Sk(\cY)$ and $\Sk(\cY_K)$
only depend on $\cY\times_R R/(t^2),$
 in the following sense. Assume that $\cZ$ is a regular flat proper $R$-scheme such that $\cZ_K$ has trivial canonical sheaf and there exists an
 isomorphism of $R/(t^2)$-schemes $$f:\cY\times_R R/(t^2)\to
 \cZ\times_R R/(t^2).$$
 Then there exists an isomorphism of simplicial
spaces $\Sk(\cY)\to \Sk(\cZ)$ that maps $\Sk(\cY_K)$ onto
$\Sk(\cZ_K)$.
\end{prop}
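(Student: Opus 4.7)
The plan is to combine Proposition~\ref{prop:indep} with the log-theoretic description of the weight function from \eqref{sss:log}. Applying Proposition~\ref{prop:indep} to the given iso $f$, we first obtain that $\cZ$ is regular with $\cZ_k$ a strict normal crossings divisor, that the reduction $f_k \colon \cY_k \to \cZ_k$ of $f$ modulo $t$ is an isomorphism of $k$-schemes, and that there is an isomorphism of log schemes $g\colon \cY^+\times_{S^+}s^+_e \to \cZ^+\times_{S^+}s^+_e$ over $s^+_e$ compatible with $f_k$, where $e=\lcm(N_i)$ and $N_i$ denotes the multiplicity of the $i$-th component $E_i$ of $\cY_k$.

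First I will construct a homeomorphism $\phi\colon \Sk(\cY)\to \Sk(\cZ)$ of piecewise $\Z$-affine spaces. The underlying simplicial complex of $\Sk(\cY)$ is canonically identified with the dual complex $\D((\cY_k)_{\red})$, so $f_k$ provides the simplicial identification. The piecewise $\Z$-affine structure is recorded by the multiplicities $N_i$, and these can be recovered already from $\cY\times_R R/(t^2)$: at the generic point $\xi_i$ of $E_i$, the stalk of $\cY\times_R R/(t^2)$ is the artinian local ring $\mathcal{O}_{\cY,\xi_i}/(t^2)$, whose length is $2N_i$. Hence $\phi$ will be an iso of piecewise $\Z$-affine spaces.

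The remaining (and main) step is to show $\phi(\Sk(\cY_K))=\Sk(\cZ_K)$. Since $\cY_K$ and $\cZ_K$ are Calabi--Yau, I fix volume forms $\omega_\cY$ on $\cY_K$ and $\omega_\cZ$ on $\cZ_K$; by \cite[4.6.4]{MuNi}, the essential skeleta coincide with the Kontsevich--Soibelman skeleta $\Sk(\cY_K,\omega_\cY)$ and $\Sk(\cZ_K,\omega_\cZ)$. By \eqref{sss:log}, the restriction of $\weight_{\omega_\cY}$ to $\Sk(\cY)$ is the piecewise affine function $x\mapsto v_x(\mathrm{div}_{\cY^+}(\omega_\cY))+1$, where $\mathrm{div}_{\cY^+}(\omega_\cY)$ is a $\Z$-divisor supported on $\cY_k$. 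Since any two volume forms on the proper, geometrically connected $\cY_K$ differ by a scalar in $K^*$, this divisor is well defined modulo integer multiples of $\cY_k$, and the essential skeleton inside $\Sk(\cY)$ is determined by this class. The main obstacle is then to show that under $\phi$ the class $\mathrm{div}_{\cY^+}(\omega_\cY)\bmod \cY_k$ corresponds to $\mathrm{div}_{\cZ^+}(\omega_\cZ)\bmod \cZ_k$. For this I would observe that this class is intrinsic to the log scheme $\cY^+$: it records the restriction to $\cY_k$ of the line bundle $\omega_{\cY^+/S^+}$ together with the Calabi--Yau trivialization of its generic fiber. Using that log differentials are compatible with $fs$ base change (as invoked in the proof of Lemma~\ref{lemm:bc}), $g$ identifies $\omega_{\cY^+\times_{S^+}s^+_e/s^+_e}$ with $\omega_{\cZ^+\times_{S^+}s^+_e/s^+_e}$; carefully tracking the rescaling by $e$ induced by the base change, the relevant divisor classes on $\cY_k$ and $\cZ_k$ are shown to match. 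This will yield that $\weight_{\omega_\cY}$ and $\weight_{\omega_\cZ}\circ\phi$ differ by an additive constant on $\Sk(\cY)$, and hence their minimum loci coincide under $\phi$.
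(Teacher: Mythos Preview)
Your overall strategy matches the paper's: use Proposition~\ref{prop:indep} to obtain the log isomorphism $g$ over $s_e^+$, invoke \eqref{sss:log}, and then transport the essential-skeleton condition through $g$ via compatibility of log differentials with $fs$ base change. The construction of the simplicial/piecewise $\Z$-affine isomorphism $\phi$ is fine.

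There is, however, a real gap at the step where you pass from ``$g$ identifies the line bundles $\omega_{\cY^+\times_{S^+}s_e^+/s_e^+}$ and $\omega_{\cZ^+\times_{S^+}s_e^+/s_e^+}$'' to ``the divisor classes $\mathrm{div}_{\cY^+}(\omega_\cY)\bmod \cY_k$ and $\mathrm{div}_{\cZ^+}(\omega_\cZ)\bmod \cZ_k$ correspond under $\phi$.'' A line bundle on $\cY_k$ does not determine an element of $\bigoplus_i \Z E_i/\Z\cdot\cY_k$: the natural map from that group to $\mathrm{Pic}(\cY_k)$ is in general not injective. What you actually need to match is a \emph{section}, not just the bundle. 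You acknowledge this implicitly when you say the class records the bundle ``together with the Calabi--Yau trivialization of its generic fiber'' --- but that generic-fiber datum lives on $\cY_K$ and is precisely what $g$ does \emph{not} see, since $g$ is only defined on the log special fiber.

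The paper closes this gap with an input you are missing: by \cite[7.1]{IKN}, $H^0(\cY,\omega_{\cY^+/S^+})$ is a free $R$-module of rank one and the reduction map to $H^0(\cY_k,\omega_{\cY_k^+/s^+})$ is an isomorphism. Choosing $\omega$ to be a generator of this $R$-module, its reduction $\omega_k$ is then a generator of the global sections on the log special fiber, and a vertex $\xi$ is $\omega$-essential if and only if the stalk of $\omega_{\cY_k^+/s^+}$ at $\xi$ is generated by global sections. This last condition is intrinsic to the log special fiber (and, again by \cite[7.1]{IKN}, stable under the base change to $s_e^+$), so it is transported by $g$. In your language: \cite[7.1]{IKN} is exactly what lets you replace the generic-fiber trivialization by a canonical global section on the log special fiber, and without it the ``carefully tracking the rescaling by $e$'' step has no content.
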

\begin{proof}
  Reducing $f$ modulo $t$, we obtain an isomorphism of $k$-schemes
 $\cY_k\to \cZ_k$ and, by taking the dual intersection complexes, an
 isomorphism of simplicial spaces with piecewise $\Z$-affine structure $\Sk(\cY)\to \Sk(\cZ)$. We will
 prove that this isomorphism maps $\Sk(\cY_K)$ onto $\Sk(\cZ_K)$.

 We use the notations from Proposition
 \ref{prop:indep}\eqref{it:log} and we set $s^+=s_1^+$. We denote by $\cY^+_k$ the
 log scheme $\cY^+\times_{S^+} s^+$ obtained by
 restricting the log structure on $\cY^+$ to the special fiber
 $\cY_k$ of $\cY$.
  It follows from
   \cite[7.1]{IKN} that
 $$\Omega:=H^0(\cY,\omega_{\cY^+/S^+})$$ is a free $R$-module of rank one
 and that the reduction map
 $$\Omega\otimes_R k\to \Omega_k:=H^0(\cY_k,\omega_{\cY^+_k/s^+})$$ is an
 isomorphism. Let $\omega$ be a generator of the $R$-module $\Omega$
 and denote by $\omega_k$ its image in $\Omega_k$. By \eqref{sss:log}, the
 generic point $\xi$ of an
 irreducible component $E$ of $\cY_k$ is $\omega$-essential in the
 sense of \cite[4.5.4]{MuNi} if and only if $\omega_k$ generates
 $\omega_{\cY^+_k/s^+}$ at the point $\xi$. Moreover, the skeleton $\Sk(\cY_K)=\Sk(\cY_K,\omega)$ is
 the simplicial subspace of $\Sk(\cY)$ spanned by the vertices
 corresponding to such points $\xi$ \cite[4.5.5]{MuNi}. However,
 for every integer $d>0$, the stalk of $\omega_{\cY^+_k/s^+}$ at $\xi$ is generated by
 global sections if and only if $\omega_{\cY^+\times_{S^+}s_d^+/s_d^+}$ is generated by global sections
 at any point lying above $\xi$, by the base change property in \cite[7.1]{IKN}. The analogous statements hold for
 $\cZ$. Thus it follows from Proposition \ref{prop:indep}\eqref{it:log} that the isomorphism $\Sk(\cY)\to \Sk(\cZ)$ maps $\Sk(\cY_K)$ onto $\Sk(\cZ_K)$.
\end{proof}

\begin{theorem}\label{thm-form}
Let $X$ be a geometrically connected, smooth and projective
 $K$-variety with trivial canonical sheaf. Then the following
properties hold.
\begin{enumerate}\item\label{it:strongdef2} The essential skeleton $\Sk(X)$ is a strong deformation retract of
$X^{\an}$. \item \label{it:collapse2} If $\cX$ is a
 projective $snc$-model of $X$ over $R$, then $\Sk(X)$ is contained
in $\Sk(\cX)$ and can be obtained from $\Sk(\cX)$ (as a
topological subspace of $\Sk(\cX)$ with piecewise affine
structure) by a finite number of elementary collapses.
 \item \label{it:pseudo2} The essential skeleton $\Sk(X)$ is a
 pseudo-manifold with boundary. If $k$ is algebraically closed and $\Sk(X)$ has dimension
 $\mathrm{dim}(X)$, then it is a closed pseudo-manifold.
 \item \label{it:sphere} Assume that $k$ is algebraically closed. Let $\sigma$ be a topological generator of the absolute Galois group $G(K^a/K)$ and let $\ell$ be a prime.
  Then
 $\Sk(X_K)$ has dimension $n=\dim(X)$ if and only if the action of $\sigma$ on
 $$H^n_{\mathrm{\acute{e}t}}(X\times_K K^a,\Q_\ell)$$ has a Jordan
 block of  size $n+1$. If this holds, $X$ has a projective $snc$-model over $R$ with reduced special fiber, and $h^{i,0}(X)=0$ for $0<i<n$,
   then $\Sk(X_K)$ has the
  $\Q$-homology of an $n$-sphere.
\end{enumerate}
\end{theorem}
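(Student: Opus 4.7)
The plan is to reduce each of the four assertions to its algebraic counterpart proved earlier in the paper, using Proposition \ref{prop:sklog} as a bridge. To set up, fix a proper $snc$-model $\cX$ of $X$ over $R$ (which exists by embedded resolution of singularities over $R$) and a generator $\omega$ of the line bundle $\omega_X$; extending by the log structure associated to $\cX_s$, we may view $\omega$ as a nowhere-vanishing global section of $\omega_{\cX^+/S^+}$ in the notation of \eqref{sss:log}.

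The key technical step is an Artin-type approximation producing an algebraic model: a smooth connected $k$-curve $\cC$, a $k$-rational point $s \in \cC$ with a chosen uniformizer identified with $t$, a proper flat $\cC$-scheme $\cZ$ whose fiber $\cZ_s$ is snc, and a relative volume form $\omega'$ on $\cZ$ over $C = \cC \setminus \{s\}$, together with an isomorphism of $R/(t^2)$-schemes $\cX \times_R R/(t^2) \cong \cZ \times_{\cC} R/(t^2)$ compatible with $\omega$ and $\omega'$. Essentially, this amounts to realizing the first-order deformation class of $(\cX_k, \omega|_{\cX_k})$ by an algebraic one-parameter family, and is standard Artin approximation applied to the moduli functor of proper log smooth schemes with a trivialization of the log canonical bundle. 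By Proposition \ref{prop:indep}, such a $\cZ$ is automatically regular with snc special fiber; after shrinking $\cC$ we may also assume $\omega_{\cZ_C/C}$ is trivial. Proposition \ref{prop:sklog} then gives an isomorphism of simplicial piecewise-affine spaces $\Sk(\cX) \to \Sk(\cZ_R)$ carrying $\Sk(X) = \Sk(\cX_K)$ onto $\Sk(\cZ_K)$.

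Granted this algebraic model, parts \eqref{it:collapse2}, \eqref{it:strongdef2} and \eqref{it:pseudo2} follow quickly. For \eqref{it:collapse2}, apply Theorem \ref{thm-collapse} together with Theorem \ref{thm-ess} to $\cZ$: running MMP produces a good minimal dlt model whose skeleton is both $\Sk(\cZ_K)$ and a collapse of $\Sk(\cZ_R)$, and the collapse transfers via the identification above to a collapse of $\Sk(\cX)$ onto $\Sk(X)$. Part \eqref{it:strongdef2} follows by composing the strong deformation retraction of $X^{\an}$ onto $\Sk(\cX)$ from \eqref{sss:defret} with the retraction afforded by the elementary collapses of \eqref{it:collapse2}. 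Part \eqref{it:pseudo2} follows from Theorems \ref{thm:CY} and \ref{thm:noboundary} applied to $\cZ$, again transferred via Proposition \ref{prop:sklog}; the closed pseudo-manifold statement may first require passing to a semistable model of $\cZ$, for which Lemma \ref{lemm:bc} shows the essential skeleton changes only by affine rescaling of its piecewise-linear structure and in particular not at the level of its topology.

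For part \eqref{it:sphere}, the first move is to reduce to $k = \C$ by spreading $X$ and $\cX$ out to a finitely generated $\Q$-subalgebra of $k$, embedding it into $\C$, and invoking invariance of $\ell$-adic étale cohomology (with its Galois action) under base change of algebraically closed fields. Then Theorem \ref{thm:sphere} applies to the algebraic family $\cZ \to \cC$; under the classical comparison isomorphism between topological and étale cohomology, the topological monodromy on $H^n(F(\C),\Q) \otimes \Q_\ell$ of a general fiber $F$ corresponds to the action of a topological generator of $G(K^a/K)$ on $H^n_{\et}(X \times_K K^a,\Q_\ell)$, so the Jordan block criterion translates. The dimension of the skeleton and the $\Q$-homology sphere property are topological invariants and transfer through Proposition \ref{prop:sklog}. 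The main obstacle is the Artin approximation step: one must simultaneously approximate the $snc$-model and the trivialization of the canonical bundle, which requires formulating the moduli problem carefully so that classical results of Artin apply and that no information about the log structure on $\cZ_s$ is lost modulo $t^2$; once this is in place, the remaining work is a formal transfer.
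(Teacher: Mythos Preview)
Your overall strategy is the same as the paper's: approximate the $snc$-model $\cX$ modulo $t^2$ by an algebraic family over a curve, invoke Proposition~\ref{prop:sklog} to identify skeleta, and then cite the algebraic theorems already proved. The paper carries out the approximation by spreading out plus Greenberg approximation (as in \cite[5.1.2]{MuNi}), rather than by Artin approximation applied to a moduli problem; the former is more elementary and avoids having to formulate a representable functor. Also, you do not need to approximate the volume form $\omega$ along with the scheme: Proposition~\ref{prop:sklog} only uses that the canonical sheaf is trivial on the generic fiber, and the paper arranges this by a simple semicontinuity/shrinking argument on the base curve. (Incidentally, your claim that a generator $\omega$ of $\omega_X$ extends to a \emph{nowhere-vanishing} section of $\omega_{\cX^+/S^+}$ is not correct in general; its divisor is supported on $\cX_k$ but need not be zero.)

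The genuine gap is in part~\eqref{it:sphere}. You assert that the topological monodromy on $H^n(F(\C),\Q)\otimes\Q_\ell$ for the algebraic family $\cZ\to\cC$ corresponds to the Galois action on $H^n_{\et}(X\times_K K^a,\Q_\ell)$ via ``the classical comparison isomorphism''. But the classical comparison (Deligne, SGA7) only identifies the topological monodromy for $\cZ$ with the Galois action on $H^n_{\et}(\cZ_K\times_K K^a,\Q_\ell)$; it says nothing about the original $X$, which is related to $\cZ_K$ only by an isomorphism modulo $t^2$. The missing ingredient is the theory of logarithmic nearby cycles \cite{nakayama}: the complex of nearby cycles with its monodromy depends only on the log special fiber $\cX^+\times_{S^+}s^+$, and Proposition~\ref{prop:indep}\eqref{it:log} shows that this log special fiber is determined (after the base change $s^+_e\to s^+$) by $\cX\times_R R/(t^2)$. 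This is exactly how the paper bridges $X$ and $\cZ_K$. A similar remark applies to the transfer of the Hodge-number hypothesis $h^{i,0}(X)=0$, which you do not address; the paper handles this by arranging, via semicontinuity during the spreading-out step, that the same vanishing holds for the general fiber of $\cZ$.
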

\begin{proof}
 Let $\cX$ be a projective $snc$-model of $X$ over $R$.
    By a standard argument based on spreading out and Greenberg Approximation (as explained in \cite[5.1.2]{MuNi}, for instance) we can find a connected smooth $k$-curve
 $\cC$, a $k$-rational point $s$ on $\cC$, a uniformizer $t$ in $\mathcal{O}_{\cC,s}$ and a smooth and
 projective
  $\cC$-scheme $\cX'$ with geometrically connected fibers such that there exists an isomorphism
 $$\cX\times_R R/(t^2)\to \cX'\times_{\cC} \Spec
 \mathcal{O}_{\cC,s}/(t^2)$$ over $R/(t^2)\cong {O}_{\cC,s}/(t^2)$. Inspecting the proof of \cite[5.1.2]{MuNi}, we
 see that we can also assume that the relative canonical sheaf of  $\cX'$ is trivial over $C=\cC\setminus \{s\}$ (if the generic fiber of a smooth and proper family over
  an integral scheme
  has trivial canonical sheaf, then this holds for
  all fibers over some dense open subscheme of the base).

  By \eqref{sss:defret} we know that $\Sk(\cX)$ is a strong deformation retract of $X_K^{\an}$. Thus by Proposition
  \ref{prop:sklog}, it suffices to prove assertions \eqref{it:strongdef2}--\eqref{it:pseudo2} for $\cX'\times_{\cC}\Spec K$ instead of $X$. In this
  case, they follow from Corollary \ref{cor-defret} and
 Theorems
 \ref{thm-collapse}, \ref{thm-ess}, \ref{thm:CY} and \ref{thm:noboundary}.

 Invoking the Lefschetz Principle, we may assume that $k=\C$. By Proposition \ref{prop:indep} and the
  theory of logarithmic nearby cycles \cite[3.3]{nakayama} the action of $\sigma$ on
 $$H^n_{\mathrm{\acute{e}t}}(X\times_K K^a,\Q_\ell)$$ has a Jordan
 block of  size $n+1$ if and only if the corresponding statement
 holds for $\cX'_K$. By Deligne's comparison theorem for \'etale
 and complex analytic nearby cycles in \cite[Exp.XIV]{sga7b}, it
 is also equivalent to the property that the monodromy action on
 the degree $n$ singular cohomology of a general fiber of $\cX'$
 has a Jordan block of size $n+1$. If $h^{i,0}(X)=0$ for $0<i<n$,
 then we can assume that this also holds for a general fiber of
 $\cX'$, by the proof of \cite[5.1.2]{MuNi} (if this property is
 satisfied by the generic fiber of a smooth and proper family over
 an integral scheme, then it holds for all fibers over a dense
 open subscheme of the base, by semi-continuity).
  Thus the assertion \eqref{it:sphere} follows from Theorem
 \ref{thm:sphere}.
\end{proof}


\begin{thebibliography}{SGAIII00}

\bibitem[SGA7b]{sga7b}
{\em Groupes de monodromie en g\'eom\'etrie alg\'ebrique. {II}}.
\newblock S\'eminaire de G\'eom\'etrie Alg\'ebrique du Bois-Marie 1967--1969
  (SGA 7 II), Dirig\'e par P.~Deligne et N.~Katz. Volume~340 of {\em Lecture Notes in Mathematics.} Springer-Verlag, Berlin (1973).

\bibitem[BCHM10]{BCHM}
C.~Birkar, Caucher, P.~Cascini, C.~Hacon, J. ~M$^{\mbox{c}}$Kernan, James,
\newblock{Existence of minimal models for varieties of log general type}.
  {\em J. Amer. Math. Soc.}, {\bf 23} (2010), no. {\bf 2}, 405--468.

\bibitem[Be90]{berkbook}
V.~G. Berkovich.
\newblock {\em {Spectral theory and analytic geometry over non-archimedean
  fields}}. Volume~33 of {\em Mathematical Surveys and Monographs}.
\newblock American Mathematical Society, Providence, RI (1990).

\bibitem[Be96]{berk}
V.~G. Berkovich.
\newblock {Vanishing cycles for formal schemes II.}
\newblock {\em Invent. Math.} {\bf 125} (1996), no. {\bf 2}, 367--390.

\bibitem[Be99]{berk-contr}
V.~G. Berkovich.
\newblock {Smooth $p$-adic analytic spaces are locally contractible.}
\newblock {\em Invent. Math.} {\bf 137} (1999), no. {\bf 1}, 1--84.

\bibitem[Be09]{berk-limit}
V.~G. Berkovich.
\newblock {A non-Archimedean interpretation of the weight zero subspaces of
  limit mixed Hodge structures.}
\newblock In {\em Algebra, arithmetic, and geometry: in honor of Yu. I. Manin. Vol. I.} Volume~269 of  {\em Progress in Mathematics},
 Birkh\"{a}user Boston, Inc., Boston, MA (2009), pages 49--67.

\bibitem[dFKX12]{dFKX}
T.~de Fernex, J.~Koll\'ar and C.~Xu.
 \newblock The dual complex of singularities. \newblock{\em Preprint}, arXiv:1212.1675.

\bibitem[dJ95]{dJ}
A.~J. de~Jong.
\newblock {Crystalline {D}ieudonn\'e module theory via formal and rigid
  geometry.}
\newblock {\em Publ. Math., Inst. Hautes \'Etud. Sci.}, 82:5--96, 1995.

 \bibitem[Fu11]{Fujino}
O.~Fujino.
\newblock Semi-stable minimal model program for varieties with trivial canonical divisor. {\it Proc. Japan Acad. Ser. A Math. Sci.} {\bf 87} (2011), no. {\bf 3}, 25--30.


 \bibitem[HX13]{HX}
C.~Hacon and C.~Xu.
\newblock Existence of log canonical closures. {\it Invent. Math.} {\bf 192} (2013), no. {\bf 1}, 161--195.

\bibitem[IKN05]{IKN}
L.~Illusie, K.~Kato and C.~Nakayama. \newblock Quasi-unipotent
logarithmic Riemann-Hilbert correspondences. \newblock {\em J.
Math. Sci. Univ. Tokyo} {\bf 12} (2005), no. {\bf 1}, 1--66.


\bibitem[Ka89]{kato-intro}
K.~Kato. \newblock Logarithmic structures of Fontaine-Illusie.
\newblock In: {\em Algebraic analysis, geometry, and number theory}. Johns Hopkins Univ. Press, Baltimore, MD (1989), pages 191--224.

\bibitem[Ki03]{kisin}
M.~Kisin. \newblock Endomorphisms of logarithmic schemes.
\newblock {\em Rend. Sem. Mat. Univ. Padova} {\bf 109} (2003), 247--281.

\bibitem[Ko11]{Kol11}
 J.~Koll\'ar.
 \newblock Sources of log canonical centers. \newblock{\em Preprint}, arXiv:1107.2863.

  \bibitem[Ko13]{Kol13}
 J.~Koll\'ar,
\newblock {\em Singularities of the minimal model program.
With the collaboration of S\'andor Kov\'acs.} \newblock Volume~200
of {\em Cambridge Tracts in Mathematics.} \newblock Cambridge
University Press, Cambridge (2013).


\bibitem[KK10]{KK}
J.~Koll\'ar, and S.~Kov\'acs.
\newblock Log canonical singularities are Du Bois. {\it J. Amer. Math. Soc.} {\bf 23} (2010), no. {\bf 3}, 791--813.

 \bibitem[KM98]{KM}
J.~Koll\'ar and S.~Mori. \newblock  Birational geometry of
algebraic varieties. \newblock Volume~134 of {\em Cambridge Tracts
in Mathematics.}  Cambridge University Press, Cambridge (1998).


\bibitem[KS06]{KoSo}
M.~Kontsevich and Y.~Soibelman.
\newblock Affine structures and non-archimedean analytic spaces.
\newblock In: P.~Etingof, V.~Retakh and I.M.~Singer (eds). {\em The unity of
mathematics. In honor of the ninetieth birthday of I. M. Gelfand.}
Volume~244 of {\em Progress in Mathematics.} Birkh\"{a}user
Boston, Inc., Boston, MA (2006), pages 312--385.

\bibitem[MN13]{MuNi}
M.~Musta\c{t}\u{a} and J.~Nicaise.
 \newblock Weight functions on non-archimedean analytic spaces and the
Kontsevich-Soibelman skeleton. \newblock{\em Preprint},
arXiv:1212.6328.

\bibitem[Na98]{nakayama}
C.~Nakayama.
\newblock Nearby cycles for log smooth families.
\newblock {\em Compositio Math.} {\bf 112} (1998), no. {\bf 1}, 45--75.

\bibitem[Ni11]{Ni-sing}
J.~Nicaise.
\newblock Singular cohomology of the analytic {M}ilnor fiber, and mixed {H}odge
  structure on the nearby cohomology.
\newblock   {\em J. Algebraic Geom.} {\bf 20} (2011), 199--237.

\bibitem[Ni13]{Ni-log}
J.~Nicaise.
\newblock Skeleta of log regular schemes.
\newblock   {\em In preparation.}

\bibitem[St76]{steenbrink}
J.H.M. Steenbrink.
\newblock {Limits of Hodge structures.}
\newblock {\em Invent. Math.} {\bf 31} (1976), 229--257.

\bibitem[Te13]{temkin}
M.~Temkin. \newblock Introduction to Berkovich analytic spaces.
 \newblock{\em Preprint},  arXiv:1010.2235.

\bibitem[Th07]{thuillier}
A.~Thuillier.
\newblock G\'eom\'etrie toro{\"{i}}dale et g\'eom\'etrie analytique non
  archim\'edienne. {A}pplication au type d'homotopie de certains sch\'emas
  formels.
\newblock {\em Manuscr. Math.} {\bf 123} (2007), no. {\bf 4}, 381--451.

\end{thebibliography}
\end{document}